\documentclass[11pt]{article}
\usepackage[margin=28mm]{geometry}
\usepackage{amsmath,amssymb,amsthm,enumitem,microtype}
\usepackage[hidelinks]{hyperref}
\usepackage{tikz}
\usepackage{graphicx}
\usetikzlibrary{calc}

\newcommand{\avd}{\overline d}

\newcommand{\e}{\mathrm e}

\tikzset{
  auxvertex/.style={
    circle,
    fill=black,
    inner sep=1.15pt
  },
  smallvertex/.style={
    circle,
    fill=black,
    inner sep=.75pt
  }
}

\newcommand{\DrawTone}[2]{%
  \begin{scope}[shift={#1},rotate=#2]
    \path[
      draw=black,
      fill=black,
      line width=.35pt
    ]
      (0,0)--(-.11,1.30)--(.11,1.30)--cycle;

    \node[smallvertex] at (0,0) {};
  \end{scope}
}

\newcommand{\DrawTtwo}[2]{%
  \begin{scope}[shift={#1},rotate=#2]

    \path[
      draw=blue!65!black,
      fill=blue!45,
      fill opacity=.18,
      draw opacity=.85,
      line width=.55pt
    ]
      (0,0)--(-.28,3.20)--(.28,3.20)--cycle;

    \node[auxvertex]   at (0,0) {};
    \node[smallvertex] at (0,.80) {};
    \node[smallvertex] at (0,1.60) {};
    \node[smallvertex] at (0,2.40) {};

    \foreach \y in {0,.80,1.60,2.40}{%
      \DrawTone{(0,\y)}{-30}
      \DrawTone{(0,\y)}{30}
    }

  \end{scope}
}

\newcommand{\DrawTthree}[2]{%
  \begin{scope}[shift={#1},rotate=#2]

    \path[
      draw=black!60,
      fill=black!35,
      fill opacity=.14,
      draw opacity=.75,
      line width=.6pt
    ]
      (0,0)--(-.40,5.10)--(.40,5.10)--cycle;

    \node[auxvertex]   at (0,0) {};
    \node[smallvertex] at (0,3.25) {};

    \DrawTtwo{(0,0)}{-50}
    \DrawTtwo{(0,0)}{50}

    \DrawTtwo{(0,3.25)}{-50}
    \DrawTtwo{(0,3.25)}{50}

    \path[
      draw=black!65,
      fill=black!48,
      line width=.45pt
    ]
      (0,5.10) ellipse [x radius=.40, y radius=.105];

    \node[
      font=\scriptsize,
      text=black,
      inner sep=0pt,
      anchor=south
    ] at (0,5.27) {$B_3^\sigma(v)$};

  \end{scope}
}

\newtheorem{thm}{Theorem}[section]
\newtheorem{lem}[thm]{Lemma}
\newtheorem{claim}[thm]{Claim}
\newtheorem{prop}[thm]{Proposition}
\theoremstyle{definition}
\newtheorem{defn}[thm]{Definition}
\theoremstyle{plain}

\author{
Nemanja Dragani\'c\thanks{Bocconi University, Milan, Italy.  \\\emph{Email}: \textbf{nemanja.draganic@unibocconi.it}.
Research supported in part by the Javotte Bocconi Institute. Parts of this research were carried out while the author was at the University of Oxford and supported by SNSF grant 217926, and while the author at the EPF Lausanne and supported by SNSF grant P5R5-2\_239120.}
}

\title{New tools for sparse graph embeddings: Hybrid expanders and application to nested cycles}

\date{}

\begin{document}
\maketitle

\begin{abstract}
We introduce \emph{hybrid expanders}, sparse graphs which combine sublinear expansion across all scales with strong constant-factor expansion on smaller, polylogarithmic sets. We show that every $n$-vertex graph of average degree at least
$2^{\omega(\log^*n)}$
 contains such a subgraph, and use this local expansion to bring into the sparse setting tree-embedding and rollback tools more commonly associated with random or pseudorandom graphs. As an application, for fixed $k\in\mathbb N$, we prove that every graph
satisfying this average-degree condition contains  $k$ pairwise edge-disjoint cycles with nested vertex sets and compatible cyclic orders, resolving up to a quasi-constant factor a problem of Thomassen, originating in work of Erd\H{o}s.
\end{abstract}
\section{Introduction}
  Many classical problems in extremal graph theory ask how large the average
  degree of a graph must be to force a particular configuration. For several
  problems concerning cycles and subdivisions, a sufficiently large constant
  average degree turns out to suffice, even when additional conditions are
  imposed on lengths or cyclic orders.

A remarkable collection of results of this kind has appeared over the last
decade.  Liu and Montgomery~\cite{LiuMontgomery} proved that sufficiently large constant average
degree forces a cycle whose length is a power of two, resolving a problem of
Erd\H{o}s~\cite{LiuMontgomery}; in the same work they solved the
odd-cycle problem of Erd\H{o}s and Hajnal~\cite{Erdos1981},
and confirmed Thomassen's conjecture by proving that sufficiently large
constant average degree forces a balanced clique subdivision
\cite{Thomassen1984}.  Gil Fern\'andez and Liu~\cite{FernandezLiuPillar}
subsequently proved Thomassen's pillar conjecture~\cite{Thomassen1989}:
sufficiently large constant minimum degree forces two disjoint cycles of the
same length, whose corresponding vertices are connected with vertex-disjoint
paths of the same length.  

A common starting point for such arguments is the sublinear-expander method
introduced by Koml\'os and Szemer\'edi in their work on topological cliques
\cite{KomlosSzemerediI,KomlosSzemerediII}.  Their argument, together with the
independent proof of Bollob\'as and Thomason~\cite{BollobasThomason}, established the quadratic
average-degree threshold for subdivisions of complete graphs.  The method has since developed into a general
paradigm for attacking similar questions: one passes to a subgraph which retains a substantial proportion of
the original average degree and in which all sets expand by a sublinear factor,
and then carries out the desired construction inside this subgraph. The
surveys of Letzter~\cite{LetzterSurvey} and Montgomery~\cite{MontgomerySurvey} give broad accounts of the many results
obtained in this way, and give insight about the intricate machinery developed for sublinear expanders.

Several important questions remain open, although we have seen significant progress on them.
In a question which attracted a lot of recent attention~\cite{DraganicMethukuCorreiaSudakov,DvorakMartinsThomasseTrotignon,DraganicGirao, letzter2026nearly, ChakrabortiJanzerMethukuMontgomery}, Chen,
Erd\H{o}s and Staton~\cite{ChenErdosStaton} asked whether sufficiently large constant average degree
forces a cycle with at least as many chords as vertices.  Dragani\'c, Methuku,
Munh\'a Correia and Sudakov~\cite{DraganicMethukuCorreiaSudakov} proved that average degree $O(\log^8 n)$ suffices. More recently, motivated in part by a
question of Dvo\v{r}\'ak, Martins, Thomass\'e and Trotignon
\cite{DvorakMartinsThomasseTrotignon}, Dragani\'c and Gir\~ao\cite{DraganicGirao} showed that a
large constant minimum degree already forces a cycle with almost linearly
many chords, but the loss remains polylogarithmic in the length of the cycle. Furthermore, in upcoming work, the authors manage to bring the required average degree bound from~\cite{DraganicMethukuCorreiaSudakov} down to essentially $\Theta(\log\log n)$.

\paragraph{Nested cycles.}
The problem studied here originates in a collection of three questions posed
by Erd\H{o}s in 1975 \cite{Erdos1975}.  The first asks for two edge-disjoint
cycles whose vertex sets are nested.  Bollob\'as proved that a linear number
of edges suffices \cite{BollobasNested}, and Chen, Erd\H{o}s and Staton~\cite{ChenErdosStaton}
extended this to any fixed number of cycles.  In the third question, the two cycles must have exactly the same vertex
set. A construction of Pyber, R\"odl and Szemer\'edi gives a lower bound
of order $\log\log n$ on the required average degree
\cite{PyberRodlSzemeredi}, while Chakraborti, Janzer, Methuku and Montgomery
proved the first polylogarithmic upper bound
\cite{ChakrabortiJanzerMethukuMontgomery}.

The second question lies between these two.  Here the vertex sets are nested,
but the common vertices must also occur in the same cyclic order.  Equivalently,
the cycles can be drawn as nested polygons without geometric crossings.  Gil
Fern\'andez, Kim, Kim and Liu proved the optimal linear bound for two such
cycles, using a delicate construction around a shortest cycle in a sublinear
expander \cite{FKKL}. The question asking for $k$ such cycles, asked by
Thomassen~\cite{Thomassen1989} conjectured that, for every fixed $k$,
  sufficiently large constant average degree forces $k$ edge-disjoint
  nested cycles without geometric crossings.

In his recent ICM survey, Montgomery~\cite{MontgomerySurvey} reiterated this question, and observed that the shortest-cycle mechanism
in that proof appears inherently suited to producing one further cycle starting from the shortest one, but
does not offer an evident route to an arbitrary number of cycles.  Recently, Xu, Zeng and Zhang~\cite{XuZengZhang} obtained the first
general
result.  Combining a controlled wrapping argument with
robust sublinear-expander technology developed by Alon, Buci\'c, Sauermann,
Zakharov and Zamir~\cite{AlonBucicSauermannZakharovZamir}, they proved that, for every fixed $k\ge3$, every
sufficiently large $n$-vertex graph with average degree at least
\[
 C_k (\log n)^{k-1}(\log\log n)^{k-3}
\]
contains $k$ such cycles. Subsequently, Ai, Gutin and Hao~\cite{AiGutinHao} sharpened this bound to
\[
 O_k\!\left(\frac{(\log\log n)^2}{\log\log\log n}\right)
\] for every fixed \(k\ge3\).
Our improvement comes within a hair's breadth of the conjectured constant bound.

\begin{thm}\label{thm:nested}
For every integer $k\ge1$ there is a constant $C=C(k)$ such that every
$n$-vertex graph of average degree at least
\[
 2^{C\log^* n}
\]
contains $k$ pairwise edge-disjoint cycles with nested vertex sets and
compatible cyclic orders.
\end{thm}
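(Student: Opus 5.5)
The plan is to pass first to a hybrid expander, as promised in the abstract, and then build the $k$ nested cycles by induction on $k$, using local constant-factor expansion to route new cycles around an existing one.

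\textbf{Step 1: reduction to a hybrid expander.} Given $G$ of average degree $d\ge 2^{C\log^* n}$, I invoke the structural result announced in the abstract. It yields a subgraph $H\subseteq G$ with average degree $d'\ge d^{\Omega(1)}$, still of the form $2^{\omega(\log^* n)}$. In $H$ every set expands sublinearly at all scales, in the Koml\'os--Szemer\'edi sense. In addition, sets of polylogarithmic size (in $|H|$) expand by a constant factor, say $|N(X)|\ge 2|X|$. The iterated-logarithm loss arises because the cleaning is iterated across scales. Each round costs a constant factor in degree, and there are $O(\log^* n)$ rounds; this is exactly what $2^{C\log^*n}$ pays for. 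From now on I work only in $H$.

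\textbf{Step 2: the base cycle and a linkage framework.} I take a short cycle $C_1$ in $H$ of length $\ell=O(\log |H|)$, which exists by sublinear expansion. I then attach a structure that allows later cycles to pass through the vertices of $C_1$ in order. For each vertex $v_i$ of $C_1$, in cyclic order, I embed a small tree $T_i$ rooted at $v_i$. These trees are pairwise vertex-disjoint apart from their roots and use no edge of $C_1$. Each $T_i$ has about $k$ levels of branching, so that $k-1$ further edge-disjoint ``passes'' through $v_i$ become available. The embedding uses the Friedman--Pippenger / Haxell-type tree-embedding tool, which works inside the polylogarithmic sets because of the constant-factor local expansion. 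The tree sizes are polylogarithmic, so they fit in the regime where strong expansion holds.

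\textbf{Step 3: inductive wrapping.} Suppose nested cycles $C_1,\dots,C_j$ have been built, with $C_j$ passing through $v_1,\dots,v_\ell$ in order and through additional vertices of the trees. For each $i$, I connect a fresh leaf region of $T_i$ to a fresh leaf region of $T_{i+1}$ by a path $P_i$. These paths are vertex-disjoint from each other and from everything used so far, except at their endpoints. They are found using sublinear expansion at large scales, with the used vertex set, of size $O(k\ell\,\mathrm{polylog})$, deleted as a robust obstacle. The new cycle $C_{j+1}$ runs down into each $T_i$ to its root $v_i$, back up through an edge-disjoint branch, and then along $P_i$. Then $V(C_{j+1})\supseteq V(C_j)$, the common vertices appear in the same cyclic order, and edge-disjointness follows from using fresh branches. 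To keep the vertex sets nested, I arrange the order differently: $C_{j+1}$ is built first as the largest cycle, and the inner cycles are obtained by shortcutting it. Equivalently, I reverse the induction so that each earlier cycle's vertices are contained in the later ones, visiting $C_j$'s extra vertices via the tree branches in the same order.

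\textbf{Main obstacle.} The hard step is connecting the $\ell$ consecutive pairs with disjoint paths while avoiding an obstacle set that grows with each round. When large sets are protected, sublinear expansion alone loses too much. This is where the rollback technique comes in. When a connecting path fails, I delete (roll back) the offending tree branches and re-embed them using the constant-factor local expansion; this keeps every root with many available leaves, so the expansion of leaf sets beats the obstacle. I would first try to prove a lemma of the following form: in a hybrid expander, any $m=\mathrm{polylog}$ pairs of disjoint sets of size $\mathrm{polylog}$ can be linked disjointly while avoiding any set of comparable size. Iterating this lemma $k$ times gives the theorem.
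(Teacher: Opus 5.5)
\textbf{There is a genuine gap in Step~3.} It sits exactly where the problem becomes hard, namely for $k\ge3$.

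Nestedness requires $V(C_j)\subseteq V(C_{j+1})$ in the same cyclic order, with disjoint edge sets. So $C_{j+1}$ must pass through \emph{every} vertex of $C_j$, which means making a detour around every edge of $C_j$. Your $C_{j+1}$ (a fresh branch of $T_i$, a fresh path $P_i$, a fresh branch of $T_{i+1}$) meets $C_j$ only in the roots $v_1,\dots,v_\ell$. It therefore misses the branch vertices and the vertices of the routed paths that $C_j$ uses.

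Trees hanging off the vertices of $C_1$ cannot repair this. A tree contains a unique path between two of its vertices, so the edges of a used branch cannot be bypassed inside it. The routed paths $P_i$ carry no structure at all.

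Your remedy (``build the largest cycle first and shortcut'', ``reverse the induction'') only restates the requirement. Every edge of $C_1,\dots,C_{k-1}$ would have to be a chord of $C_k$, and nothing in your construction produces such chords.

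Correspondingly, your ``main obstacle'' is misidentified. Linking leaf sets of size $q=\lceil\log^{10k}N\rceil$ past a forbidden set of size at most $q/100$ is direct (Lemma~\ref{lem:routing}). The real issue is making the output of each round iterable.

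\textbf{The missing idea is a \emph{recursive preparation} that is re-created on every new path.}

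In the paper, $T_r^\sigma(v)$ attaches a copy of $T_{r-1}$ to every vertex of the binary tree $B_r^\sigma(v)$. Hence any selected root-to-leaf branch, after pruning the rest (Lemma~\ref{lem:pruning}), is a path already carrying an $(r-1)$-preparation (Claim~\ref{claim:branch-preparation}).

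For the middle path, the routing lemma first deletes a small set $R$ to restore local expansion in the remaining graph $H_0$, and then takes a \emph{shortest} path there. Since $H_0$ expands locally and no vertex has more than three neighbours on a shortest path, this path is good in $H_0$. So Theorem~\ref{thm:fp} can attach $A_{r-1}(P)$ to it \emph{after} routing.

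Lemma~\ref{lem:prepared-edge} then builds, by induction on $r$, a depth-$r$ system for an edge $xy$ by concatenating depth-$(r-1)$ systems along $Q_x$, $P$ and $Q_y$. Each level of this alternation between local extension and global routing costs $\delta+12$ in the expansion parameter. That is why the local expansion constant has to grow with $k$ (the paper takes $K=D_k+3$); your $|N(X)|\ge2|X|$ is too weak for the tree embeddings. Similarly, $C_1$ should be a \emph{shortest} cycle, which makes it good so that the Friedman--Pippenger embedding can start from it.
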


The main difficulty in passing from two cycles to several is that the
construction must be iterated. The two-cycle argument starts from a
shortest cycle, whose minimality gives useful control over its
neighbourhood. The outer cycle produced by that argument need not retain
this property, so the same argument cannot simply be repeated. In our
construction, every new path is equipped with further trees that allow
another layer to be built around it. These preparations must be maintained
at every vertex while preserving the cyclic order.

\paragraph{Hybrid expanders}
Sublinear expanders are very well suited to finding paths between large vertex sets while avoiding forbidden sets, and this has been one of the main reasons for their effectiveness in sparse extremal graph theory. Their expansion guarantees, however, may be weaker than one could hope for small sets. In the usual application of the method, it produces a graph which expands on all relevant scales, typically for every set of size at most half the order of the graph \cite{KomlosSzemerediI,KomlosSzemerediII,LetzterSurvey}. In this form, the expansion factor cannot in general be taken to be a positive constant. Moshkovitz and Shapira~\cite{MoshkovitzShapira} constructed graphs with large constant average degree for which every subgraph has a set of size comparable to its order $n$, with expansion at most $1/\log n$, up to factors involving $\log\log n$. Thus the inverse-logarithmic loss is essentially unavoidable when one asks for one uniform guarantee over the entire range of set sizes.
The scale-sensitive notion introduced by Koml\'os and Szemer\'edi gives a more refined statement. In its standard formulation, a set $X$ is guaranteed an expansion ratio of order essentially
$
\rho(|X|)=\Theta\left(\frac{1}{\log^2(|X|)}\right)
$~\cite{KomlosSzemerediI,KomlosSzemerediII,LetzterSurvey}. This is much stronger near the bottom of the permitted range, but the guaranteed ratio still tends to zero as $X$ grows. In particular, it does not provide constant-factor expansion throughout a useful range of small sets. 

The main idea of the paper is to separate two tasks which are usually both
assigned to the same sublinear expander. 
The following is the central notion of this paper.
\begin{defn}
For $n,K,T\in \mathbb N$, an $n$-vertex graph is a $(K,T)$\emph{-hybrid expander} if:
\[
 |N_G(X)|\ge \frac{|X|}{\log^2 n}
 \qquad\text{for }1\le |X|\le N/2,
\]
and
\[
 |N_G(X)|\ge K|X|
 \qquad\text{for }1\le |X|\le \log^T n.
\]
\end{defn}
On large scales it retains
the weak expansion that is available in arbitrary sparse graphs and is
sufficient for global routing.  On the polylogarithmic scale on which the
construction takes place, it behaves as a genuine large constant-factor expander.

This local expansion allows us to import tools which are normally available
in random or otherwise random-like graphs.  In particular, we use the
Friedman--Pippenger tree-embedding theorem to prepare many possible routes in
advance \cite{FriedmanPippenger}.  The rollback principle, presented for example in~\cite{DKN,MontgomerySpanningTrees}, then allows unused parts of an embedded tree to be
pruned without losing the embedding invariant.  The weak global
expansion is used afterwards, to connect two large surviving leaf sets
by a short path while avoiding everything constructed earlier.  In this way,
local preparation and global routing are handled by the two different
expansion regimes for which they are respectively best suited. For the proof of Theorem~\ref{thm:nested}, this indeed gives a simple proof for $k=2$. For larger $k$, because of the recursive nature of the construction, we will interchangeably use both types of expansion: every time we do some global routing, we will make sure that the resulting graph is good in the appropriate context, so that we may continue with our local extendability machinery.



The point of the method is to enhance sublinear expanders with stronger expansion at the small scale, which allows for much stronger and more elegant arguments than in their classic applications.  Under the slightly stronger degree assumption used
here, this makes the argument for the problem of two nested cycles with compatible orders particularly transparent
and gives, in a uniform way, the local reservoirs that must be manufactured
more delicately in the work of Liu and Montgomery, Gil Fern\'andez, Kim, Kim
and Liu, and Gil Fern\'andez and Liu
\cite{LiuMontgomery,FKKL,FernandezLiuPillar}. 

\paragraph{Towards constant degree.}
It is natural to ask whether hybrid expansion can be extracted already from
sufficiently large constant average degree.  This is false in general.  We construct $n$-vertex graphs with average degree $\Omega(\log^*n)$ in which every nontrivial subgraph $H$ contains a nonempty set $X$ with
\[
 |X|\le \log(|V(H)|)
 \qquad\text{and}\qquad
 |N_H(X)|<10|X|.
\]
Thus no direct strengthening of our extraction lemma can settle the
constant-degree conjecture.

We believe, however, that this points to a broader programme.  A graph of large average degree either contains a
hybrid expander, where random-like embedding tools become available, or it
fails local expansion repeatedly. The natural hope is to understand this process
well enough to find the desired configuration directly in the graphs that
do not contain a hybrid expander.  In this sense, hybrid expansion could potentially be one side of a potential structural dichotomy for sparse graphs.

\section{Preliminaries}

All graphs are finite and simple, and all logarithms have base two.  We write
$ \log^{(0)} n=n$ and $\log^{(t+1)}n=\log\bigl(\log^{(t)}n\bigr)$,
and let $\log^*n$ be the least $t\ge0$ for which $\log^{(t)}n\le1$.
For $X\subseteq V(G)$, put $
 \Gamma_G(X)=\bigcup_{x\in X}N_G(x)
 $ and 
 $
 N_G(X)=\Gamma_G(X)\setminus X$.

For $\alpha>0$, a graph $G$ is an \emph{$\alpha$-expander} if
\[
 |N_G(X)|\ge\alpha|X|
 \qquad\text{whenever }1\le |X|\le |V(G)|/2.
\]

We say that cycles $C_1,\ldots,C_k$ are \emph{nested} if their edge sets are
pairwise disjoint, their vertex sets satisfy
$
 V(C_1)\subseteq\cdots\subseteq V(C_k),
$
and the cycles can be oriented so that, for every $a<b$, the cyclic order on
$V(C_a)$ induced by $C_a$ is the restriction of the cyclic order induced by
$C_b$.

\subsection{The Friedman-Pippenger extendability method}
We briefly recall the tree-embedding method of Friedman and Pippenger, usually referred to as the extendability method. Building on the embedding results of Friedman–Pippenger [12] and Haxell [15], this method allows one to embed trees in suitable expanding graphs. 
Traditionally, this method was mostly utilized in random and random-like graphs, where the trees we can embed are usually of size linear in the size of the graph. More precisely, if sets up to order $s=s(n)$ expand well, then one can find bounded degree trees of order $s$. 

We will use Theorem~\ref{thm:fp} as our main embedding tool. We also record the simple Lemma~\ref{lem:pruning}, which shows that leaves may be removed from an extendable tree while preserving extendability. This makes it possible to apply Theorem~\ref{thm:fp} iteratively. Variants of this method have been used in the resolution of several long-standing problems in graph theory; see, for example,~\cite{DKN, MontgomerySpanningTrees, Draganic2024Hamiltonicity}
\begin{defn}
Let $s\in\mathbb N$ and $K>0$.  A graph $G$ is $(s,K)$-expanding if
$|N_G(X)|\ge K|X|$ for every $X\subseteq V(G)$ with $|X|\le s$.
\end{defn}

\begin{defn}
Let $s,D\in\mathbb N$ and let $\phi:F\hookrightarrow G$ be an embedding.  We
write $\phi(F)$ for the vertex image $\phi(V(F))$, and put
$d_F(\phi^{-1}(v))=0$ when $v\notin\phi(F)$.  The embedding $\phi$ is
$(s,D)$-good if $\Delta(F)\le D$ and, for every $X\subseteq V(G)$ with
$|X|\le s$,
\begin{equation}\label{eq:extendable}
 |\Gamma_G(X)\setminus\phi(F)|\ge
 \sum_{v\in X}\bigl(D-d_F(\phi^{-1}(v))\bigr)+|\phi(F)\cap X|.
\end{equation}
\end{defn}

When $F\subseteq G$, we say that $F$ is $(s,D)$-good in $G$ if the
inclusion embedding is $(s,D)$-good.

We use the Friedman--Pippenger extension theorem~\cite{FriedmanPippenger}
and the pruning lemma in the form proved by Dragani\'c, Krivelevich, and
Nenadov~\cite{DKN}.

\begin{thm}[Friedman--Pippenger extension]\label{thm:fp}
Let $s,D$ be positive integers, let $F$ have maximum degree at most $D$ and
fewer than $s$ vertices, and let $\phi:F\hookrightarrow G$ be
$(2s-2,D)$-good.  If $G$ is $(2s-2,D+2)$-expanding, then every graph $F'$
obtained from $F$ by successively adding leaves, with $|V(F')|\le s$ and
$\Delta(F')\le D$, has an extending $(2s-2,D)$-good embedding in $G$.
\end{thm}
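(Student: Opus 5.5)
By induction on the number of added leaves, it suffices to add a single leaf at a time. The statement then follows by applying the one-step result $|V(F')|-|V(F)|$ times, since every intermediate tree still has maximum degree at most $D$ and at most $s$ vertices.

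So let $\phi:F\hookrightarrow G$ be $(2s-2,D)$-good with $|V(F)|<s$. Let $F'$ be obtained by attaching a new leaf $y$ to a vertex $x\in V(F)$, where $d_F(x)\le D-1$. Put $u=\phi(x)$. We must find a neighbour $w\notin\phi(F)$ of $u$ such that extending $\phi$ by $y\mapsto w$ keeps (\ref{eq:extendable}) valid for every $X$ with $|X|\le 2s-2$.

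Applying (\ref{eq:extendable}) to $X=\{u\}$ gives $|N_G(u)\setminus\phi(F)|\ge D-d_F(x)+1\ge 2$, so candidates exist. When we pass from $\phi$ to the extension $\phi_w$, the inequality changes in three ways for a given $X$. The left side loses $w$ if $w\in\Gamma_G(X)$. The right side drops by one if $u\in X$, because the deficiency at $u$ decreases. If $w\in X$, the right side changes by $(-D+1)+1$.

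Call a set $X$ \emph{critical} if equality holds in (\ref{eq:extendable}) for $\phi$. Sets with slack at least one are harmless. Tight sets containing $u$ are also harmless, because the right side drops. The danger is a tight set $X$ with $u\notin X$ and $w\in\Gamma_G(X)$, or with $w\in X$ (this case is handled by the same counting). The plan is the standard Friedman--Pippenger/Haxell argument: show that some $w\in N_G(u)\setminus\phi(F)$ avoids $\Gamma_G(X)$ for every critical $X$ of size at most $s-1$ not containing $u$.

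The key steps are as follows. First, show that critical sets are closed under union, or more precisely that the union of two critical sets of size at most $s-1$ is critical whenever its size is at most $2s-2$. This uses submodularity of $X\mapsto|\Gamma_G(X)\setminus\phi(F)|$ together with modularity of the right side.

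Second, use $(2s-2,D+2)$-expansion to show that critical sets are small. A critical $X$ satisfies $|\Gamma_G(X)\setminus\phi(F)|\le (D+1)|X|$. On the other hand, $|\Gamma_G(X)|\ge |N_G(X)|\ge (D+2)|X|$, and part of $\Gamma_G(X)$ lies in $\phi(F)$, which has fewer than $s$ vertices. Combining these gives $|X|<s$, and hence that the union of all critical sets is a single critical set $X^*$ of size less than $s$. We then have to settle for sets of size up to $2s-2$: apply expansion to $X\cup\phi(F)$, which has size at most $2s-2$, to bound things.

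Third, if $u\notin X^*$, consider $X^*\cup\{u\}$. Its size is at most $2s-2$, and it is not critical by maximality of $X^*$, so it has slack. Unpacking the inequality shows that $u$ has a neighbour outside $\phi(F)\cup\Gamma_G(X^*)$. Choose $w$ to be that neighbour. If instead $u\in X^*$, any candidate $w$ works, because every tight set containing $u$ is compensated. Finally, check the new vertex $w$ inside $X$: there the right side gains only $1-(D-1)\le 0$, so that case is fine.

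The main obstacle is the second step, namely controlling the sizes of critical sets so that unions stay within the range $2s-2$. The factor $2s-2$ in the hypotheses is exactly what this requires, and it is where the expansion constant $D+2$ rather than $D+1$ is used. If the direct union argument fails, I would fall back on Haxell's formulation. That means taking $X^*$ to be a maximal critical set of size at most $s-1$, and arguing that any critical set meeting $N(u)$ can be merged with it without exceeding $2s-2$.
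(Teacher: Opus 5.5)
The paper does not prove this theorem; it is quoted from Friedman--Pippenger and Dragani\'c--Krivelevich--Nenadov. Your outline is the standard critical-set argument, and most of it is right: the leaf-by-leaf reduction, the closure of critical sets under unions via submodularity, and the size bound. For critical $X$ with $|X|\le 2s-2$ one has $(D+2)|X|-|\phi(F)|\le|\Gamma_G(X)\setminus\phi(F)|\le(D+1)|X|$, so $|X|\le|\phi(F)|\le s-1$. The remark about applying expansion to $X\cup\phi(F)$ is unnecessary.

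The genuine gap is your case $u\in X^*$. There you claim that any candidate $w$ works because critical sets containing $u$ are compensated. That says nothing about critical sets \emph{avoiding} $u$, and these can still exist when $u\in X^*$. Here is a local configuration showing this:
let $v\notin\phi(F)$ be non-adjacent to $u$ with $|N_G(v)\setminus\phi(F)|=D$, so $\{v\}$ is critical; let $u$ have $D-d_F(x)+2$ neighbours outside $\phi(F)$, exactly one of which, $w$, is adjacent to $v$. Then $\{u,v\}$ is critical, so $u\in X^*$. Yet extending by $y\mapsto w$ leaves $\{v\}$ with only $D-1$ neighbours outside the new image.

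Relatedly, your computation for $w\in X$ is wrong. The right-hand side does not change at all, since the term $D$ for $w$ becomes $(D-1)+1$. So a critical $X$ containing $w$ but not $u$ survives only if $w\notin\Gamma_G(X)$, not because of any gain.

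The repair is to drop the case split. Let $X_u^*$ be the union of all critical sets not containing $u$. By your union and size arguments, $X_u^*$ is critical, avoids $u$, and has at most $s-1$ vertices. Now apply plain goodness, not non-criticality, to $X_u^*\cup\{u\}$, which has size at most $s\le 2s-2$. The set $\Gamma_G(X_u^*\cup\{u\})\setminus\phi(F)$ is the disjoint union of $\Gamma_G(X_u^*)\setminus\phi(F)$ and $N_G(u)\setminus(\phi(F)\cup\Gamma_G(X_u^*))$. The right-hand side is additive, with $u$ contributing $D-d_F(x)+1$. Since $X_u^*$ is critical, this gives $|N_G(u)\setminus(\phi(F)\cup\Gamma_G(X_u^*))|\ge D-d_F(x)+1\ge 2$.

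Any such $w$ works:
\begin{enumerate}[label=(\roman*)]
\item A critical $X$ avoiding $u$ lies in $X_u^*$, so $w\notin\Gamma_G(X)$ and neither side changes.
\item For a critical set containing $u$, the left side drops by at most one while the right side drops by exactly one.
\item Every non-critical set has slack at least one, which absorbs the possible loss of $w$.
\end{enumerate}
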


\begin{lem}[Pruning]\label{lem:pruning}
For positive integers $s,D$, if $\phi:F\hookrightarrow G$ is $(s,D)$-good
and $F'$ is obtained from $F$ by successively deleting leaves, then the
restriction of $\phi$ to $F'$ remains $(s,D)$-good in $G$.
\end{lem}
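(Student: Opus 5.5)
The plan is to prove the pruning lemma by induction on the number of deleted leaves, so it suffices to handle a single leaf. Let $F'=F-\ell$, where $\ell$ is a leaf of $F$ with unique neighbour $p$. Write $u=\phi(\ell)$ and $w=\phi(p)$, and let $\phi'$ be the restriction of $\phi$ to $F'$. Clearly $\Delta(F')\le\Delta(F)\le D$. Fix $X\subseteq V(G)$ with $|X|\le s$. I will compare both sides of~\eqref{eq:extendable} for $\phi'$ with those for $\phi$.

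\emph{Right-hand side.} Passing from $F$ to $F'$ changes the right-hand side in three places.
\begin{itemize}[nosep]
\item At $w$: the degree of $p$ drops by one, so if $w\in X$ the right side increases by $1$.
\item At $u$: if $u\in X$, then $d_F(\ell)=1$ becomes $0$, which adds $1$ to the sum. At the same time $u$ leaves $\phi(F)$, so the $|\phi(F)\cap X|$ term loses $1$. The net contribution of $u$ is therefore $0$.
\end{itemize}
So the right-hand side increases by exactly $\mathbf 1[w\in X]$.

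\emph{Left-hand side.} We have $\Gamma_G(X)\setminus\phi'(F')=(\Gamma_G(X)\setminus\phi(F))\cup(\{u\}\cap\Gamma_G(X))$. Hence the left-hand side increases by $\mathbf 1[u\in\Gamma_G(X)]$. If $w\in X$, then $u\in N_G(w)\subseteq\Gamma_G(X)$, because $uw$ is an edge of $G$, being the image of the edge $\ell p$. So whenever the right side gains $1$, the left side gains at least $1$.

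\emph{Conclusion.} In every case the new left side minus the new right side is at least the old difference, which is nonnegative by goodness of $\phi$. This gives~\eqref{eq:extendable} for $\phi'$, and the induction completes the proof.

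The only real point is the cancellation at $u$, together with the observation that $u\in\Gamma_G(X)$ whenever $w\in X$. Both hinge on $\Gamma_G$ including $N_G$ of each vertex of $X$ and on the $|\phi(F)\cap X|$ term in the definition. No step is a serious obstacle; I would just check the edge case $u\in X$ and $w\in X$ simultaneously, where both effects add independently.
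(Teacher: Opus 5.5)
Your proof is correct. It is the standard one-leaf-at-a-time check: the change at $\phi(\ell)$ in the degree sum cancels its loss from the $|\phi(F)\cap X|$ term, and $\phi(\ell)\in\Gamma_G(X)$ whenever $\phi(p)\in X$. The paper does not reprove this lemma but cites it from Dragani\'c--Krivelevich--Nenadov, where it is proved by this same argument.
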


\section{Hybrid expanders}

Throughout this section, constants hidden in $O_k(\cdot)$ may depend on
$k$.  We write $G-S=G[V(G)\setminus S]$ and
$\overline d(G)=2e(G)/|V(G)|$.

\subsection{An expanding subgraph}

We first pass to a subgraph which expands strongly on polylogarithmic sets
and still has enough expansion on all sets of size at most half its order.

\begin{lem}[Hybrid expander subgraph]\label{lem:expander-subgraph}
For every $K,T\ge1$ there is a constant $C=C(K,T)$ such that the following
holds.  If an $n$-vertex graph has average degree at least $d$, where
$
 d\ge 2^{C\log^*n},
$
then it contains a subgraph $G$ of order $N\ge d^{1/10}$ satisfying
\begin{align}
 |N_G(X)|&\ge |X|/\log^2N
     &&\text{for }1\le |X|\le N/2,\label{eq:bulk-expansion}\\
 |N_G(X)|&\ge K|X|
     &&\text{for }1\le |X|\le \log^T N.\label{eq:local-expansion}
\end{align}
Equivalently, $G$ is a $(K,T)$-hybrid expander.
\end{lem}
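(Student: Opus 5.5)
We argue by iterated passage to denser subgraphs. Each time the local expansion \eqref{eq:local-expansion} fails we either increase the density significantly or shrink the order drastically, and the $\log^*$ count bounds how often the second can happen.

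\emph{Step 1 (sublinear expander).} We start from the standard Koml\'os--Szemer\'edi extraction, in the form with expansion $1/\log^2$ of the order. Every graph $H$ of average degree $d'$ contains a subgraph $G'$ with $\avd(G')\ge d'/2$ and $|V(G')|\ge d'$ satisfying \eqref{eq:bulk-expansion} (with $N=|V(G')|$). This is the classical argument: take a subgraph minimising $e(G')/|V(G')|$ minus a penalty, then delete a non-expanding set.

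\emph{Step 2 (local defect).} Suppose $G'$ on $N'$ vertices fails \eqref{eq:local-expansion}. Then some set $X$ with $|X|\le\log^T N'$ has $|N(X)|<K|X|$. We look inside $Y=X\cup N(X)$, which has $|Y|<(K+1)|X|\le (K+1)\log^T N'$. There are two cases.
\begin{enumerate}[label=(\alph*)]
\item If $e(G'[Y])\ge \avd(G')|X|/4$, say, then $G'[Y]$ has average degree at least $\avd(G')/(2(K+1))$. Its order is only polylogarithmic in $N'$.
\item Otherwise $X$ sends many edges out into a set of size less than $K|X|$. To get a clean dichotomy I would instead choose $X$ maximal (or minimal) non-expanding, or iterate deletions of small non-expanding sets. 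Deleting such sets costs few edges unless they are dense, and a dense one gives case (a).
\end{enumerate}
The cleanest approach is to define a potential. Repeatedly delete small non-expanding sets $X$ together with their boundary edges while the total deleted is small. If the process removes many edges then, by averaging over the deleted pieces, some $Y$ has average degree at least $c_K\avd$, which is case (a). If not, the remainder keeps density at least $\avd/2$ and expands locally. We then redo Step 1 inside it, noting that Step 1 only deletes sets and checking that local expansion survives (or alternate the two steps).

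\emph{Step 3 (iteration and the $\log^*$ count).} In case (a) we recurse on a graph of order at most $(K+1)\log^T N'$ and average degree at least $d'/c$ for a constant $c=c(K)$. After $t$ recursions the order is at most roughly $\log^{(t)}n$ up to constants depending on $K,T$, while the degree is at least $d/c^{2t}$. Since the order always exceeds the average degree, recursion must stop once $\log^{(t)}n$-type bounds drop below $d/c^{2t}$. This happens within $t\le O_{K,T}(\log^* n)$ steps, because polylog iterates reduce to towers: $\log^T$ applied $t$ times is at most $\log^{(t/2)}$ for large arguments. So if $d\ge 2^{C\log^*n}$, the degree remains at least $d^{1/2}$ throughout, and at termination we have a $(K,T)$-hybrid expander of order at least $d^{1/2}\ge d^{1/10}$.

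The main obstacle is Step 2: guaranteeing that a local non-expanding set yields a genuinely denser small subgraph rather than just a sparse pendant piece. I would handle it through the deletion/potential argument, using the cleaning bound that if cumulatively deleted non-expanding sets $X_i$ have $\sum |X_i|$ large relative to edges lost, the graph restricted to $\bigcup (X_i\cup N(X_i))$ over a suitable dyadic class has large density. The bookkeeping of constants against the $2^{C\log^*n}$ budget is then routine.
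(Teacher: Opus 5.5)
Your skeleton matches the paper's. A dense, locally non-expanding set lets you pass to a subgraph of order at most $(K+1)\log^T m$ at a constant-factor cost. Since $f(f(m))\le\log m$ for $f(m)=(K+1)\log^T m$, this happens only $O_{K,T}(\log^*n)$ times.

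The gap is in everything that happens between these events. You use the bulk condition as a black-box Step 1 that loses a factor $2$ each time it is invoked, and you must re-invoke it after every local repair. Neither deleting vertices nor passing to $H[X\cup N_H(X)]$ preserves local expansion, so ``checking that local expansion survives'' cannot succeed. Conversely, local repairs by deletion can destroy the bulk condition. These alternations are not case (a) events, and nothing in your argument bounds their number by $O(\log^*n)$; a priori they are bounded only by $n$. So the bound $d/c^{2t}$ in Step 3 is unjustified: you may pay the factor $2$ unboundedly often.

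Two observations close this, and they are what the paper uses to run a single combined process.

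First, your own case split is already a complete dichotomy. Every edge meeting $X$ lies in $G'[Y]$, so in case (b) fewer than $\overline d(G')|X|/4$ edges meet $X$, and hence $\overline d(G'-X)\ge\overline d(G')$. A sparse violating set, for a violation of either condition, can simply be deleted at no cost. No potential function or dyadic grouping is needed. Your last paragraph also has this backwards: a dense piece is signalled by the edges lost being large relative to $\sum|X_i|$, not the reverse. And you must pass to a single piece, since a union of pieces need not have polylogarithmic order.

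Second, bulk losses must be bounded globally, not per invocation. If a dense $X$ violates the bulk condition in $H$ (order $m$, average degree $a$), then $H[X\cup N_H(X)]$ has order at most $3m/4$ and average degree at least $(1-\log^{-2}m)\,a$. Orders decrease monotonically through the entire run and stay at least $d^{1/10}$ (run the process only while this holds). So the orders at these bulk steps decrease geometrically, and the product of all these factors is at least $1-O(1/\log d)\ge 1/2$. This holds across all interleavings with local steps and deletions. The total loss is then this factor $1/2$ times $(K+1)^{-2\log^*n-O(1)}$. That leaves average degree at least $\sqrt d/2$ throughout, which keeps the order above $d^{1/10}$ and forces the process to terminate at a $(K,T)$-hybrid expander.
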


\begin{proof}
Starting with the given graph, we will repeatedly pass to a subgraph until we either found the required subgraph, or the size is about to drop below $d^{1/10}$.  Suppose the
current graph is $H$, with $m$ vertices and average degree $a$, and that one
of the two desired inequalities fails for a nonempty set $X$.  If
$\overline d(H-X)\ge a$, replace $H$ by $H-X$.  Otherwise, putting
$Y=N_H(X)$, there are no edges from $X$ to
$V(H)\setminus(X\cup Y)$, and hence
\[
 2e(H[X\cup Y])>a|X|.
\]
We then replace $H$ by $H[X\cup Y]$.

If $X$ violates~\eqref{eq:bulk-expansion}, the new graph has at most
$(1+\log^{-2}m)m/2\le 3m/4$ vertices and average degree at least
$\frac{a|X|}{|X\cup Y|}\geq (1-\log^{-2}m)a$.  If $X$ violates~\eqref{eq:local-expansion}, the new
graph has at most
\[
 f(m):=(K+1)\log^T m
\]
vertices and average degree at least $\frac{a|X|}{|X\cup Y|}\geq a/(K+1)$.  For large $m$,
$f(f(m))\le\log m$, so the second type of replacement occurs at most
$2\log^*n+O_{K,T}(1)$ times before the order falls below $d^{1/10}$.

At the first type of replacement the successive orders decrease by a factor
at most $3/4$.  Therefore, for the subgraph sizes $m_1<m_2<m_3\ldots <m_t$ it holds that
\[
 \sum_{i\in[t]}\frac1{\log^2m_i}\leq \sum_{j\geq 0}\frac{1}{(\log(d^{1/10})+j\log(\frac{4}{3}))^2}=O(1/\log d)
 \quad\text{and hence}\quad
 \prod_i\left(1-\frac1{\log^2m_i}\right)\ge\frac12.
\]
Thus every graph produced by the procedure has average degree at least
\[
 d \prod_i\left(1-\frac1{\log^2m_i}\right)(K+1)^{-2\log^*n-O_{K,T}(1)}\ge\frac{\sqrt d}{2},
\]
provided $C(K,T)$ is sufficiently large.  The process cannot reach a graph
with fewer than $d^{1/10}$ vertices, since its average degree would then
exceed its order.  It must therefore stop at a graph satisfying
both expansion conditions.
\end{proof}

\subsection{Routing around a good preparation}

We shall use without further comment the following monotonicity: if an
$(s,D)$-good graph has maximum degree at most $D'$, then it is also
$(s',D')$-good whenever $s'\le s$ and $D'\le D$.

The next lemma is the only routing statement used in the construction.
The lemma allows us to join two sets $L_1,L_2$ in a good preparation
while avoiding a prescribed set $Z$. After deleting $Z$ and the rest of
the preparation, we remove a further small set $R$ to restore local
expansion. The global expansion of $G$ then gives a short connecting
path, and a shortest such path is good in the remaining graph.

\begin{lem}[Routing lemma]\label{lem:routing}
Let $G$ be an $N$-vertex $\alpha$-expander
for some $\alpha>0$.
Let $H$ be an induced subgraph of $G$, let $F\subseteq H$ be
$(s,D)$-good in $H$ with $\Delta(F)\le\delta$, and let
$L_1,L_2\subseteq V(F)$ be disjoint.  Let
\(
 D'=D-\delta-12 
\) and
suppose that $D'\ge2$, $1\le t\le N/2$. Finally,  let
$Z\subseteq V(H)\setminus V(F)$ such that
\begin{align}
 t+|Z|/9&\le s,\label{eq:routing-local}\\
 |V(G)\setminus V(H)|+|V(F)|+10|Z|/9
   &\le \alpha t/4.\label{eq:routing-global}
\end{align}
Then there is a set $R$, with $|R|\le |Z|/9$, such that
\[
 H_0=H-\bigl((V(F)\setminus(L_1\cup L_2))\cup Z\cup R\bigr)
\]
is $(t,D'+3)$-expanding.  If $|L_1|,|L_2|>|Z|/9$, then $H_0$ contains an
$L_1$--$L_2$ path of length at most
\(
 O\bigl(\log t+\alpha^{-1}\log N\bigr).
\)
A shortest such path is $(t,D')$-good in $H_0$.
\end{lem}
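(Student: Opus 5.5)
We argue in three stages: build $R$ by a maximal-bad-set argument, route through $H_0$ by breadth-first growth, and check goodness of a shortest path directly from the definition.

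\emph{Constructing $R$.} Put $W=(V(F)\setminus(L_1\cup L_2))\cup Z$ and $H_1=H-W$. Take $R\subseteq V(H_1)$ maximal (or a union built greedily) subject to $|R|\le s$ and $|N_{H_1}(R)|<(D'+3)|R|$. We first show $|R|\le |Z|/9$. Goodness of $F$ in $H$ gives, for $|R|\le s$,
\[
|\Gamma_H(R)\setminus V(F)|\ge \sum_{v\in R}(D-d_F(v))+|V(F)\cap R|\ge (D-\delta)|R|.
\]
Since $R$ avoids $W$, $R\cap V(F)\subseteq L_1\cup L_2$, which is still fine. Deleting $W$ removes from $N(R)$ at most the vertices of $Z$ outside $F$; the $F$-vertices were never counted. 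Hence
\[
|N_{H_1}(R)|\ge (D-\delta)|R|-|R|-|Z|\ge (D'+11)|R|-|Z|.
\]
Bad-ness of $R$ then forces $8|R|<|Z|$. To stay within the range $|R|\le s$ while growing $R$, we use the standard union trick: if $R$ is bad and $R'$ is bad in $H_1-R$ with $|R\cup R'|$ small, then $R\cup R'$ is bad. The sizes stay at most $2\cdot|Z|/9\le s$ by \eqref{eq:routing-local}, so the bound persists. After tightening constants this gives $|R|\le|Z|/9$. Maximality of $R$ then yields that $H_0=H_1-R$ is $(t,D'+3)$-expanding for $|X|\le t$, using $t+|Z|/9\le s$ exactly as in the standard argument: a bad $X$ in $H_0$ would merge with $R$ into a bad set in $H_1$ of size at most $s$.

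\emph{Routing.} Grow balls from $L_i\setminus R$, which is nonempty since $|L_i|>|Z|/9\ge|R|$. Within $H_0$, local expansion by factor $D'+3\ge5$ gives $|B^r|$ growing geometrically until size $t$, after $O(\log t)$ steps. Beyond that we use the $\alpha$-expansion of $G$. For a ball $B$ with $t\le|B|\le N/2$, $|N_G(B)|\ge\alpha|B|$, and the neighbours lost to $V(G)\setminus V(H_0)$ number at most
\[
|V(G)\setminus V(H)|+|V(F)|+|Z|+|R|\le \alpha t/4\le \alpha|B|/4
\]
by \eqref{eq:routing-global}. So $|N_{H_0}(B)|\ge \tfrac34\alpha|B|$, and the ball reaches size above $N/2$ in $O(\alpha^{-1}\log N)$ further steps. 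Two such balls intersect, which gives the path.

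\emph{Goodness of a shortest path $P$.} Since $\Delta(P)\le 2$, we must show that for $|X|\le t$,
\[
|\Gamma_{H_0}(X)\setminus V(P)|\ge \sum_{v\in X}(D'-d_P(v))+|X\cap V(P)|.
\]
Because $P$ is shortest, every vertex outside $P$ has at most $3$ neighbours on $P$, and internal vertices of $P$ have no chords. Hence $|N_{H_0}(X)\cap V(P)|$ is at most $3|X\setminus V(P)|$ plus $2|X\cap V(P)|$ from path-neighbours, and the internal vertices in $X$ contribute the $d_P$ terms. Combining with $|N_{H_0}(X)|\ge(D'+3)|X|$ gives the inequality.

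The main obstacle is the bookkeeping in the construction of $R$. The difficulty is keeping the $-|Z|$ loss bounded and the merged sets inside the range $s$; the slack of $12$ in $D'$ and the factor $9$ are there to absorb exactly this.
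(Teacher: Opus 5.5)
Your route is the paper's: remove non-expanding sets to form $R$, grow balls first by local expansion and then by the global $\alpha$-expansion, and check goodness of a shortest path from the fact that every vertex has at most three neighbours on it. Your maximal bad set of size at most $s$ is an equivalent packaging of the paper's greedy removal of sets of size at most $t$. It is slightly cleaner, because the size bound then holds for every bad set at once, and $t+|Z|/9\le s$ is used only in the merge step. Your routing and shortest-path stages are correct once $|R|\le|Z|/9$ is known.

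The gap is the count for $R$. You subtract $|R|$ from the $(D-\delta)|R|$ lower bound and get only $8|R|<|Z|$. ``Tightening constants'' is not available, since $12$, $9$ and $10/9$ are fixed in the statement. The weaker bound breaks three later steps:
\begin{itemize}
\item it does not give the claimed $|R|\le|Z|/9$;
\item it does not give $|R\cup X|\le|R|+t\le s$ in your merge step, because \eqref{eq:routing-local} only controls $t+|Z|/9$;
\item in the routing stage it only gives $|Z|+|R|<9|Z|/8$, which exceeds the $10|Z|/9$ allowed by \eqref{eq:routing-global}.
\end{itemize}

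The missing observation is that the loss of $|R|$ is already paid for inside the goodness inequality. A vertex $v\in R\setminus V(F)$ has $d_F(v)=0$, so it contributes $D$, not $D-\delta$. Discounting $v$ itself from $\Gamma_H(R)\setminus V(F)$ therefore leaves $D-1\ge D-\delta$, using $\delta\ge1$. A vertex $v\in R\cap V(F)\subseteq L_1\cup L_2$ contributes $D-d_F(v)+1$ and does not lie in $\Gamma_H(R)\setminus V(F)$ at all. Hence, for $|R|\le s$, the graph $H-(V(F)\setminus(L_1\cup L_2))$ gives $R$ at least $(D-\delta)|R|$ external neighbours. Subtracting $|Z|$ yields exactly $9|R|<|Z|$ for every bad $R$, which is what the rest of your argument needs.

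Also drop the remark that sizes stay below $2|Z|/9$. It does not follow from \eqref{eq:routing-local}, since $|Z|/9$ need not be at most $t$, and it is not needed.
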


\begin{proof}
Put
$
 J=H-(V(F)\setminus(L_1\cup L_2)).
$
For $X\subseteq V(J)$ with $|X|\le s$, goodness gives
\begin{align*}
 |N_J(X)|
 &\ge |\Gamma_H(X)\setminus V(F)|-|X\setminus V(F)|\\
 &\ge (D-1)|X\setminus V(F)|
   +\sum_{x\in X\cap(L_1\cup L_2)}(D-d_F(x)+1)\\
 &\ge (D-\delta)|X|.
\end{align*}
Thus $J$ is $(s,D-\delta)$-expanding.

Put $R_0=\varnothing$. If $J-(Z\cup R_{i-1})$ contains a nonempty
set $X_i$ of size at most $t$ with fewer than $(D'+3)|X_i|$
external neighbours, put $R_i=R_{i-1}\cup X_i$.
Every vertex of $N_J(R_i)\setminus Z$ belongs to the external
neighbourhood of some $X_a$ at the step when $X_a$ was removed.
Since the sets $X_a$ are disjoint, it follows that
\[
 |N_J(R_i)|<|Z|+(D'+3)|R_i|.
\]
Whenever $|R_i|\le s$, expansion in $J$ therefore gives
\[
 (D-\delta)|R_i|
 \le |N_J(R_i)|
 <|Z|+(D'+3)|R_i|,
\]
and hence $9|R_i|<|Z|$.

Suppose that $i$ is the first step at which $|R_i|>|Z|/9$.
Then
\[
 |R_i|\le |Z|/9+t\le s,
\]
so the preceding inequality applies and gives a contradiction.
Thus the process stops with a set $R$ satisfying $|R|\le |Z|/9$.
At termination, $H_0=J-(Z\cup R)$ is $(t,D'+3)$-expanding.

Let
\[
 B=(V(G)\setminus V(H))\cup(V(F)\setminus(L_1\cup L_2))\cup Z\cup R.
\]
For every $X\subseteq V(G-B)$ with $t\le |X|\le N/2$,
\eqref{eq:routing-global} gives
\[
 |N_{G-B}(X)|\ge\alpha|X|-|B|\ge 3\alpha|X|/4.
\]
Since $|L_i|>|R|$, each $L_i$ contains a vertex of $H_0$.
Grow a ball in $H_0$ from one such vertex in each set.
While a ball has fewer than $t$ vertices, local expansion increases
its size by a factor of at least $D'+4$, so it reaches size at least
$t$ after $O(\log t)$ steps. Once its size is between $t$ and $N/2$,
the preceding inequality increases its size by a factor of at least
$1+3\alpha/4$. Thus both balls exceed $N/2$ vertices after a further
$O(\alpha^{-1}\log N)$ steps. They must intersect, giving an
$L_1$--$L_2$ path of the required length.

Let $P$ be a shortest $L_1$--$L_2$ path in $H_0$.
The path $P$ has no chord. Moreover, a vertex outside $P$ cannot
have neighbours at positions differing by more than two along $P$,
since these would give a shorter path with the same endpoints.
Consequently, every vertex of $H_0$ has at most three neighbours on $P$.
For $|X|\le t$, we therefore have
\[
 |\Gamma_{H_0}(X)\setminus V(P)|
 \ge |N_{H_0}(X)|-3|X|
 \ge D'|X|.
\]
Since $L_1$ and $L_2$ are disjoint, every vertex of $P$ has degree
at least one in $P$. Taking $d_P(v)=0$ outside $V(P)$, this gives
\[
 \sum_{v\in X}\bigl(D'-d_P(v)\bigr)+|X\cap V(P)|
 \le D'|X|.
\]
Together with $\Delta(P)\le2\le D'$, this proves that $P$ is
$(t,D')$-good in $H_0$.
\end{proof}

\section{Nested cycles without geometric crossings}
Before turning to the argument for general $k\ge 2$ nested cycles, we treat the case $k=2$. This result will not be used in the proof of the general case; we include it only to introduce the reader to the Friedman--Pippenger machinery and to illustrate the basic utility of hybrid expanders.

We recall that the case $k=2$ was already resolved in \cite{FKKL}, where it was proven that a constant average-degree bound suffices. That proof is more delicate and relies on robust sublinear expanders in their usual form. By contrast, the argument below gives a quick and transparent proof under the slightly weaker assumption of quasi-constant average degree. We hope it makes clear why hybrid expanders are particularly well suited to this type of construction.

\subsection{Warm-up: two nested cycles}\label{subsec:two-cycles}

\begin{prop}\label{prop:two-cycles}
There are absolute constants $K_0,T_0$ such that every $N$-vertex
$(K_0,T_0)$-hybrid expander contains two nested cycles.
\end{prop}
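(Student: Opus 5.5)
The plan is to build an inner cycle $C_1=v_1v_2\cdots v_\ell v_1$ and then, for each $i$ (indices mod $\ell$), a path $Q_i$ from $v_i$ to $v_{i+1}$. The paths $Q_i$ will be internally disjoint from each other and from $C_1$, and edge-disjoint from $C_1$. Then $C_2=Q_1Q_2\cdots Q_\ell$ is a cycle containing $V(C_1)$ in the same cyclic order and edge-disjoint from $C_1$, so $C_1,C_2$ are nested. The hybrid structure is used in two ways. Local expansion, via Theorem~\ref{thm:fp} and Lemma~\ref{lem:pruning}, prepares at each $v_i$ two disjoint trees with many leaves. Global expansion, via Lemma~\ref{lem:routing}, then connects these leaf sets one pair at a time.

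\emph{Step 1: a good short cycle.} Let $C_1$ be a shortest cycle of $G$. Ball growth using \eqref{eq:bulk-expansion}, as in the proof of Lemma~\ref{lem:routing}, gives diameter $O(\log^3N)$, so $\ell=O(\log^3N)$. The goal is to show that every vertex outside $C_1$ has at most a bounded number (say three) of neighbours on $C_1$. The argument mirrors the shortest-path argument in Lemma~\ref{lem:routing}: two neighbours at cycle-distance $r$ would give a cycle of length $r+2$, so all such neighbours are pairwise at cycle-distance at least $\ell-2$, roughly. Combined with $(\log^{T_0}N,K_0)$-expansion, the same computation as at the end of Lemma~\ref{lem:routing} shows that $C_1$ is $(s,D)$-good in $G$, with $s=\log^{T_0}N/2$ and $D=K_0-4$. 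I expect this step to be the main point needing care. A vertex \emph{on} $C_1$ also has no chords to $C_1$ (a chord would create a shorter cycle), which handles $|X\cap V(C_1)|$ in \eqref{eq:extendable}. If the shortest-cycle estimate turns out to be messy, a fallback is to take $C_1$ as a shortest path $P$ from Lemma~\ref{lem:routing}, which is already good, closed up by one further routing step.

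\emph{Step 2: preparation.} Using Theorem~\ref{thm:fp} iteratively, attach to each $v_i$ two trees $A_i$ and $B_i$ rooted at $v_i$, each of maximum degree at most $3$ and each with $\lambda=\log^{7}N$ leaves. This raises the degree of each $v_i$ by two, to four. The total size is $O(\ell\lambda)=O(\log^{10}N)$, which is well below $s$ once $T_0$ is large. The resulting forest $F$ together with $C_1$ is $(2s',D)$-good, with $\delta=\Delta\le4$. Choose $K_0$ so that $D'=D-\delta-12\ge2$ and the expansion hypothesis of Theorem~\ref{thm:fp} holds.

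\emph{Step 3: sequential routing.} For $i=1,\dots,\ell$, apply Lemma~\ref{lem:routing} with the following choices:
\begin{itemize}
\item $H=G$ and $\alpha=\log^{-2}N$;
\item $F$ is the current pruned structure;
\item $L_1$ is the leaves of $A_i$ and $L_2$ is the leaves of $B_{i+1}$;
\item $Z$ is the union of the interiors of the previously found connecting paths;
\item $t=\log^{13}N$.
\end{itemize}
Each connecting path has length $O(\log^3N)$, so $|Z|=O(\log^6N)<9\lambda$, and both \eqref{eq:routing-local} and \eqref{eq:routing-global} hold for $T_0$ around $15$. This yields an $L_1$--$L_2$ path $P_i$ that avoids $V(F)\setminus(L_1\cup L_2)$, avoids $Z$, and hence avoids $C_1$. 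Next, use Lemma~\ref{lem:pruning} to prune $A_i$ and $B_{i+1}$ down to the root-to-leaf branches ending at the endpoints of $P_i$. The structure stays good in $G$, and the interior of $P_i$ is added to $Z$, where it is disjoint from the new $F$. Set $Q_i=(\text{branch of }A_i)\,P_i\,(\text{branch of }B_{i+1})$.

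The trees are vertex-disjoint apart from their roots, and the $P_i$ avoid all tree vertices other than their endpoints and avoid each other. Hence the $Q_i$ are internally disjoint, avoid $C_1$ internally, and use no edge of $C_1$. This yields the two nested cycles.
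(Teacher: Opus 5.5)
Your proposal is correct and follows essentially the same route as the paper: you take a shortest cycle, make it good using the at-most-three-neighbours observation, attach two Friedman--Pippenger trees at each cycle vertex (one per incident edge), apply Lemma~\ref{lem:routing} once per edge with the earlier middle paths placed in $Z$, and concatenate. The differences are cosmetic and do not affect the argument: the paper bounds the girth by $2\log N+2$ via minimum degree three rather than through the $O(\log^3N)$ diameter, and it keeps the full preparation $F$ fixed instead of pruning unused branches after each step.
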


\begin{proof}
Take $D=20$, $K_0=23$ and $T_0=40$. Put
$q=\lceil\log^{10}N\rceil$, $s=\lfloor\log^{30}N\rfloor$ and
$\lambda=\lceil c\log^4N\rceil$, where $c$ is a sufficiently large
constant.

Let $C$ be a shortest cycle in $G$. Since $G$ has minimum degree at least
three, $|C|\le2\log N+2$. Moreover, every vertex has at most three
neighbours on $C$. Hence, for
$|X|\le2s-2,$\[
|\Gamma_G(X)\setminus V(C)|\ge(K_0-3)|X|=D|X|,\]
and so $C$ is
$(2s-2,D)$-good. At every vertex
of $C$, attach two disjoint rooted binary trees, each having between $q$ and $2q$ leaves and fewer than
$4q$ vertices; hence we can think of it as each edge getting one attached tree for each of its vertices. The resulting graph has fewer than $s$ vertices and maximum
degree at most four. Theorem~\ref{thm:fp} embeds it in $G$ while preserving
$(2s-2,D)$-goodness; denote the current embedded graph by $F$.

We now process the edges of $C$ one by one, in each round producing a short path between the vertices of the current edge. Suppose that the current
edge is $xy$. Let $L_x$ and $L_y$ be the leaf sets in the trees assigned to
this edge, and let $U$ consist of the internal vertices of the connecting
paths chosen for earlier edges which lie outside $F$. At every stage
$|U|=O(\lambda \log N)$, while $|V(F)|=O(q\log N)=o(s/\log^2N)$.
Lemma~\ref{lem:routing}, applied with $t=s$, gives a short path $P$ from a
leaf $a\in L_x$ to a leaf $b\in L_y$, avoiding $U$ and the rest of $F$, thus producing a new short $x-y$ path.
Repeat the same procedure for the next edge.

The replacement paths for distinct edges are internally disjoint: their
branches lie in different incidence trees, while each new middle path avoids
both the current preparation and the middle paths chosen earlier. After all
edges have been processed, concatenate the replacement paths in the cyclic
order of $C$. They form a cycle $C'$. The cycles $C$ and $C'$ are
edge-disjoint, $V(C)\subseteq V(C')$, and the vertices of $C$ occur on $C'$
in their original cyclic order.
\end{proof}

The proof above reserves two large sets of choices at every incidence and
uses a short route to join the appropriate pair. For more layers, the only
new requirement is that every selected branch, and every new middle path,
should carry the same structure one rank lower. This leads to the recursive
trees below.

\subsection{$k$ nested cycles}
{\textit{Proof of Theorem~\ref{thm:nested}}.}
Before we proceed with our construction, we introduce several essential parameters, and pass to a hybrid expander.  
We set $ \delta=2k+2$, which will be the maximum degree of our construction throughout the process. Next, 
$
 D_r=\delta+10+r(\delta+12)
$
for $r\geq0$ will be the expansion rate of of the host graph at depth $r$ (it will become clear later what depth refers to).

Thus
\begin{equation}\label{eq:D-recursion}
 D_r-\delta-12=D_{r-1}\qquad(r\ge1).
\end{equation}
We apply Lemma~\ref{lem:expander-subgraph} to our $n$-vertex host graph of average degree $2^{c_k\log^*n}$ with
$
 K=D_k+3,
 $ and $
 T=100k^2
 $ and obtain a $(K,T)$-hybrid expander. Furthermore, we stress that the precise definition of $D_k,T, \delta$ is not particularly important: the reader can think about them as large constants depending only on $k$.
For the rest of the proof let $G$ be an $N$-vertex graph satisfying
\begin{align}
 |N_G(X)|&\ge \alpha|X| &&(1\le |X|\le N/2),
       &\alpha&=\log^{-2}N,\label{eq:global-expansion-G}\\
 |N_G(X)|&\ge K|X| &&(1\le |X|\le\log^T N).
       &&\label{eq:strong-expansion-G}
\end{align}

\subsubsection{Auxiliary trees and prepared paths}

\noindent\textbf{The base tree.} Put $ q=\lceil\log^{10k}N\rceil$.
Let $B(q)$ be a perfect rooted binary tree of height $\lceil\log q\rceil$;
it has between $q$ and $2q$ leaves and fewer than $4q$ vertices. Note that it is important that the leaf set of this tree is a large power of $\log N$: this will be important for connecting two such leaf-sets while avoiding a smaller set of used vertices (in applications of Lemma~\ref{lem:routing}).

\begin{defn}[Auxiliary trees]\label{def:auxiliary-trees}
The signs $-$ and $+$ distinguish two isomorphic copies, reserved for
the incoming and outgoing edges of an oriented path, respectively.
For $\sigma\in\{-,+\}$, define $T_r^\sigma(v)$ recursively
(see Fig.~\ref{fig:auxiliary-trees}).  Put
$T_0^\sigma(v)=\{v\}$.  For $r\ge1$, start with a copy of
$T_{r-1}^\sigma(v)$, join $v$ to the root of a fresh copy
$B_r^\sigma(v)$ of $B(q)$, and at every vertex $z$ of
$B_r^\sigma(v)$ attach a fresh copy of
\[
 T_{r-1}(z)=T_{r-1}^-(z)\cup T_{r-1}^+(z),
\]
identifying only the root $z$.  The two trees $T_r^-(v)$ and
$T_r^+(v)$ meet only at $v$, and we write
\[
 T_r(v)=T_r^-(v)\cup T_r^+(v).
\]
The leaves of $B_r^\sigma(v)$ are denoted by $L_r^\sigma(v)$.
\end{defn}
A direct induction gives $ |T_r(v)|\le(10q)^r$, and $\Delta(T_r(v))\le2r+1$.
\begin{figure}[ht]
\centering
\resizebox{\textwidth}{!}{%
\begin{tikzpicture}

\begin{scope}[shift={(0,0)}]
  \node[auxvertex] at (0,0) {};
  \node at (0,-.55) {$T_0^\sigma(v)$};
\end{scope}

\begin{scope}[shift={(2.0,0)}]
  \DrawTone{(0,0)}{0}
  \node at (0,-.55) {$T_1^\sigma(v)$};
\end{scope}

\begin{scope}[shift={(6.1,0)}]
  \DrawTtwo{(0,0)}{0}
  \node at (0,-.55) {$T_2^\sigma(v)$};
\end{scope}

\begin{scope}[shift={(12.2,0)}]
  \DrawTthree{(0,0)}{0}
  \node at (0,-.55) {$T_3^\sigma(v)$};
\end{scope}

\end{tikzpicture}%
}

\caption{
The first four auxiliary trees.
Every triangle introduced at each stage represents a copy of \(B(q)\).
At each stage, to every vertex of \(B(q)\) represented by the vertical triangle, two copies of the previous tree are attached.
}
\label{fig:auxiliary-trees}
\end{figure}

\begin{defn}[Preparation]\label{def:preparation}
 If $R=v_0v_1\cdots v_\ell$ is a path, where we also allow
$v_0=v_\ell$ and then regard $R$ as a cycle with distinguished vertex
$v_0$, an \emph{$r$-preparation} of $R$ is the graph
\begin{equation}\label{eq:canonical-preparation}
 A_r(R)=R\cup\bigcup_{i=0}^{\ell-1}
 \bigl(T_r^+(v_i)\cup T_r^-(v_{i+1})\bigr),
\end{equation}
where all trees $T^\sigma_r(v_i)$ are pairwise vertex disjoint, except that $T_r^+(v_i)\cap T_r^-(v_i)=\{v_i\}$.
\end{defn}

\begin{claim}\label{claim:branch-preparation}
Let $r\ge1$.  If $a\in L_r^\sigma(x)$, then the $x$--$a$ path in
$T_r^\sigma(x)$, oriented from $x$ to $a$, has an $(r-1)$-preparation obtained
by consecutively removing leaves from $ T_r^\sigma(x)$. 
\end{claim}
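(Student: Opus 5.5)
The plan is to identify an explicit subtree $S$ of $T_r^\sigma(x)$ that is, up to relabelling signs, exactly an $(r-1)$-preparation of the $x$--$a$ path. Then we use the standard fact that any subtree of a finite tree is obtained from it by successively deleting leaves.

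First we fix notation. Let $z_1$ be the root of $B_r^\sigma(x)$, and let $x=w_0,w_1=z_1,w_2,\dots,w_m=a$ be the $x$--$a$ path $P$ in $T_r^\sigma(x)$, oriented from $x$ to $a$. Here $w_1,\dots,w_m$ is the root-to-leaf path in $B_r^\sigma(x)$. By Definition~\ref{def:preparation}, an $(r-1)$-preparation of $P$ consists of $P$ together with the following pairwise disjoint trees:
\begin{itemize}[nosep]
\item a copy of $T_{r-1}^+(w_0)$ at $w_0=x$;
\item copies of both $T_{r-1}^-(w_i)$ and $T_{r-1}^+(w_i)$ at each internal vertex $w_i$, $1\le i\le m-1$;
\item a copy of $T_{r-1}^-(w_m)$ at $w_m=a$.
\end{itemize}
Each tree may meet the others only at its own root, and only when it is the partner tree of opposite sign at the same vertex.

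Next we read these pieces off Definition~\ref{def:auxiliary-trees}.
\begin{itemize}[nosep]
\item At $x$, the tree $T_r^\sigma(x)$ contains a copy of $T_{r-1}^\sigma(x)$. Since $T_{r-1}^-$ and $T_{r-1}^+$ are isomorphic copies distinguished only by their label, we declare this copy to play the role of $T_{r-1}^+(x)$.
\item At every $w_i$ with $i\ge1$, the vertex $w_i$ is a vertex of $B_r^\sigma(x)$, so a fresh copy of $T_{r-1}(w_i)=T_{r-1}^-(w_i)\cup T_{r-1}^+(w_i)$ is attached there. We keep both halves for $1\le i\le m-1$, and only $T_{r-1}^-(a)$ at $w_m=a$.
\end{itemize}
Let $S$ be the union of $P$ with all these kept trees. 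Since all copies were chosen fresh and each is identified with the rest only at its root, the kept trees are pairwise vertex-disjoint. The only exception is $T_{r-1}^-(w_i)\cap T_{r-1}^+(w_i)=\{w_i\}$, which is exactly what the definition of a preparation allows. Hence $S$ is an $(r-1)$-preparation of $P$.

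Finally, $S$ is connected and contains $x$: each kept tree hangs off a vertex of $P$, and $P$ is connected. So $S$ is a subtree of the tree $T_r^\sigma(x)$. The pieces we discard are the following:
\begin{itemize}[nosep]
\item the vertices of $B_r^\sigma(x)$ off $P$, together with their attached trees $T_{r-1}$;
\item the copy $T_{r-1}^+(a)$.
\end{itemize}
We delete them by repeatedly removing a leaf of the current tree not lying in $S$. Such a leaf always exists while the current tree strictly contains $S$: take a vertex outside $S$ at maximum distance from $S$; it is a leaf. This yields $S$ by consecutive leaf removals.

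The only delicate point is the bookkeeping at $x$ when $\sigma=-$, since there the copy of $T_{r-1}^-(x)$ must serve as the outgoing tree $T_{r-1}^+(x)$. This is harmless because the signs are only names for isomorphic copies, as the definition states. The rest of the argument is routine verification of disjointness.
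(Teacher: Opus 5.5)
Your proof is correct and is exactly the routine verification that the paper compresses into ``This is immediate from the construction'': you read the kept subtree off Definition~\ref{def:auxiliary-trees} and then delete the remaining vertices one leaf at a time. Your observation that, when $\sigma=-$, the copy of $T_{r-1}^-(x)$ must play the role of $T_{r-1}^+(x)$ is the one point the paper leaves implicit, and, as you say, it is harmless because the two signs only label isomorphic copies.
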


\begin{proof}
This is immediate from the construction.
\end{proof}

\begin{defn}[Nested path system]\label{def:nested-path-system}
A \emph{nested path system of depth $r$} from $u$ to $v$ is a tuple
\[
 \mathcal P=(P^0,P^1,\ldots,P^r)
\]
of pairwise edge-disjoint oriented $u$--$v$ paths such that
$V(P^i)\subseteq V(P^{i+1})$, and the vertices of $P^i$ occur on
$P^{i+1}$ in the same order, for every $i<r$.  When $u=v$, these paths are
cycles, read from the distinguished vertex $u$.
We write $V(\mathcal P)=\bigcup_{i=0}^rV(P^i)$.
\end{defn}

\subsubsection{Building path systems recursively}
 We first define further parameters we will use in the proof. Again, the specific choices of functions which depend on $k$ are not so important; their relative order is. We also repeat the definition of $q$ to have all non-constant parameters in one place, for the convenience of the reader:
\[
 \lambda=\lceil c_k\log^4N\rceil,
 \qquad
 s_0=\left\lfloor\frac{\log^T N}{4^{k+2}}\right\rfloor,
 \qquad
 s_r=4^rs_0\quad(0\le r\le k),
\]
\[
 b_r=2(4\lambda)^r,
 \qquad
 \mu=20\lambda(10q)^k,\qquad q=\lceil\log^{10k}N\rceil.
\]
Every preparation arising below will have at most $\mu/4$ vertices.  The path
provided by Lemma~\ref{lem:routing} has fewer than $\lambda/2$ edges, and a
root-to-leaf path in $B(q)$ has fewer than $\lambda/8$ edges.  These
statements follow directly from the definitions.

The following is the main recursive lemma.  
Its statement says that, given an edge $x,y$ in a good embedding, which also has $T_r^-(x), T^+(y)$ attached to $x,y$ respectively, one can use those trees to construct a path system of depth $r$ between $x,y$, which is not too large and avoids a prescribed not so large set $U$. 

\begin{lem}[Path system for one edge]\label{lem:prepared-edge}
Let $0\le r\le k$, let $H\subseteq G$ be induced, and let $F\subseteq H$ be
$(2s_r-2,D_r)$-good with $\Delta(F)\le\delta$ and $|V(F)|\le\mu/4$.
Suppose that $xy\in E(F)$ and that $F$ contains disjoint copies of $T_r^+(x)$ and
$T_r^-(y)$, attached to the rest of $F$ only at $x$ and $y$, respectively.
Let $U\subseteq V(H)\setminus V(F)$, and suppose that
\[
 |V(G)\setminus V(H)|\le(k-r)\mu
 \qquad\text{and}\qquad
 |U|+b_r\le q/100.
\]
Then there is a nested path system $\mathcal P=(P^0,\ldots,P^r)$ from $x$
to $y$, with $P^0=xy$ and $|V(\mathcal P)|\le b_r$, such that
\[
 (V(\mathcal P)\setminus\{x,y\})\cap U=\varnothing
 \quad\text{and}\quad
 V(\mathcal P)\cap V(F)
 \subseteq V(T_r^+(x))\cup V(T_r^-(y)).
\]
\end{lem}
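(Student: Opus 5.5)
We argue by induction on $r$. For $r=0$ the trees are trivial, and we take $\mathcal P=(xy)$. This works because $|V(\mathcal P)|=2\le b_0$, the interior of $\mathcal P$ is empty, and $V(\mathcal P)=\{x,y\}\subseteq V(T_0^+(x))\cup V(T_0^-(y))$.

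For $r\ge1$ the plan mimics the proof of Proposition~\ref{prop:two-cycles}, with one layer of recursion added.

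\emph{Step 1: routing.} Apply Lemma~\ref{lem:routing} to $H$ and $F$ with $L_1=L_r^+(x)$ and $L_2=L_r^-(y)$. Take $Z=U\setminus V(F)$, padded if necessary so that $|L_i|>|Z|/9$, which is automatic since $|L_i|\ge q$. Take $t=2s_{r-1}-2$ and $(s,D)=(2s_r-2,D_r)$. Then $D'=D_r-\delta-12=D_{r-1}$ by \eqref{eq:D-recursion}. Condition \eqref{eq:routing-local} holds because $|Z|\le q/100\ll s_0$. Condition \eqref{eq:routing-global} reads $(k-r)\mu+\mu/4+q/50\le \alpha t/4$, which holds because $\mu=\mathrm{polylog}$ of degree about $10k^2+4$, while $\alpha t$ has degree about $T-2=100k^2-2$. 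This yields an $a$--$b$ path $M$ with $a\in L_r^+(x)$ and $b\in L_r^-(y)$, of length below $\lambda/2$. The path $M$ is $(t,D_{r-1})$-good in $H_0$, where $H_0$ is $H$ with $V(F)\setminus(L_1\cup L_2)$, $Z$ and $R$ deleted. Let $Q$ be the $x$--$y$ path formed by the tree branch from $x$ to $a$ in $T_r^+(x)$, then $M$, then the tree branch from $b$ to $y$ in $T_r^-(y)$. Its length is below $\lambda/2+\lambda/4<\lambda$.

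\emph{Step 2: an $(r-1)$-preparation of $Q$.} Use Claim~\ref{claim:branch-preparation} to prune $T_r^+(x)$ and $T_r^-(y)$ by leaf deletions down to $(r-1)$-preparations of the two branches. By Lemma~\ref{lem:pruning}, the pruned part of $F$ stays $(2s_r-2,D_r)$-good in $H$. Goodness of the union $F_1$ in $H_1:=H-(V(F)\setminus V(F_1))$ restricted appropriately follows by the same computation as at the start of the proof of Lemma~\ref{lem:routing}. On the middle path $M$, use Theorem~\ref{thm:fp} in the $(t,D_{r-1}+3)$-expanding graph $H_0$: since $M$ is $(t,D_{r-1})$-good, we can attach fresh copies of $T_{r-1}^\pm(z)$ at every internal vertex $z$ of $M$. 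The total size is at most $\lambda(10q)^{r-1}\cdot 2\le s_{r-1}$, and the maximum degree is at most $2(r-1)+1+2\le\delta$. The attachments at $a$ and $b$ already exist from the tree structure, since each vertex of $B_r^\sigma$ carries a copy of $T_{r-1}$. The goal is a single graph $F'$ that is $(2s_{r-1}-2,D_{r-1})$-good in an induced $H'\subseteq H$, with $|V(G)\setminus V(H')|\le(k-r+1)\mu$ and $|V(F')|\le\mu/4$, and which contains the $(r-1)$-preparation $A_{r-1}(Q)$.

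\emph{Step 3: recursion along the edges of $Q$.} Process the edges $e_1,\dots,e_m$ of $Q$ in order, with $m<\lambda$. For each $e_j=uw$, apply the induction hypothesis at depth $r-1$ to $F'$ and the trees $T_{r-1}^+(u)$, $T_{r-1}^-(w)$. The forbidden set is $U_j$, consisting of $U$ together with all vertices of earlier systems lying outside $F'$. This gives $|U_j|\le q/100-b_r+\lambda b_{r-1}$, and $\lambda b_{r-1}\le b_r/2$, so $|U_j|+b_{r-1}\le q/100$. Each call returns a system $(P_j^0,\dots,P_j^{r-1})$ with $P_j^0=e_j$. Define $P^0=xy$ and $P^{i}=\bigcup_j P_j^{i-1}$ for $1\le i\le r$. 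Then $P^1=Q$, and $P^0$ is edge-disjoint from the rest because $xy\notin Q$. The systems for different edges intersect only at shared endpoints of $Q$. This uses two facts: each system lives in its own trees $T^\pm_{r-1}$ and in fresh vertices avoiding $U_j$, and the trees are pairwise disjoint in the preparation. Concatenation therefore preserves edge-disjointness, nesting and order. The size bound is $|V(\mathcal P)|\le 2+\lambda b_{r-1}+\lambda\le b_r$. Avoidance of $U$ and the containment in $V(T_r^+(x))\cup V(T_r^-(y))$ hold because $M$ and the new trees lie outside $F$.

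\emph{Main obstacle.} The delicate point is Step 2: certifying goodness of the combined preparation $F'$ in a suitable induced $H'$ so that the hypotheses at depth $r-1$ hold. The pruned old trees were good in $H$ with parameter $D_r$, while $M$ is only good in $H_0$, which has lost $Z\cup R$ and the discarded parts of $F$. The plan is to set $H'=H_0\cup V(\text{pruned trees})$ and to check \eqref{eq:extendable} for $F'$ in $H'$ directly. For sets $X$, split the count into vertices near the old trees, which use the slack $D_r-D_{r-1}=\delta+12$ to absorb the deleted vertices and the degree added by attaching $M$ at $a$ and $b$, and vertices elsewhere, which use the goodness of $M$ in $H_0$. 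The extension by the new trees is then done via Theorem~\ref{thm:fp} in $H'$. The bookkeeping $|V(G)\setminus V(H')|\le(k-r)\mu+\mu/4+|Z|+|R|\le(k-r+1)\mu$ is routine.
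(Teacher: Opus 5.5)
Your proposal has a genuine gap, located at the step you flagged as the main obstacle. The single combined preparation $F'\supseteq A_{r-1}(Q)$ need not be $(2s_{r-1}-2,D_{r-1})$-good in any common host. Your proposed host $H'=H_0\cup V(\text{pruned trees})$ fails, and so does the alternative.

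\begin{itemize}
\item \emph{Deleting $Z\cup R$ breaks the old trees.} The hypotheses give no control over how the old trees meet $U$. A vertex $v$ of the pruned branch preparation is only guaranteed $D_r-d_F(v)+1$ neighbours outside $V(F)$ in $H$. The set $U$ is arbitrary outside $F$ with $|U|\le q/100$; in the application it consists of earlier path systems, which may run through neighbours of $F$. So $U$ can contain all of these neighbours, and then already $X=\{v\}$ can violate \eqref{eq:extendable} in $H'$. The slack $D_r-D_{r-1}=\delta+12$ is counted per vertex of $X$, so it cannot absorb the deletion of up to about $q/90$ vertices whose adjacency to the old trees is uncontrolled. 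In Lemma~\ref{lem:routing} this loss is repaired by the cleanup set $R$, but that repair is unavailable here, since vertices of the preparation cannot be deleted.

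\item \emph{Keeping $Z\cup R$ breaks $M$ and its fresh trees.} $M$ is a shortest path only in $H_0$, and the fresh trees were embedded in $H_0$. So a vertex of $Z$ may have unboundedly many neighbours on them, and goodness fails.
\end{itemize}

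Concretely, your first recursive call on the edge of $M$ at $a$ would route between the leaves of the old $B_{r-1}^+(a)$, whose goodness lives in $H$, and the leaves of a fresh $B_{r-1}^-$, whose goodness lives in $H_0$. No host supports both.

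The missing idea is not to merge the pieces; the paper runs three separate recursions.
\begin{itemize}
\item It embeds a completely fresh $A_{r-1}(P)$ on the middle path by Theorem~\ref{thm:fp} in $H_P=H_0$. This includes new copies of $T_{r-1}^+(a)$ and $T_{r-1}^-(b)$, rather than reusing the old ones. The middle system is built entirely inside $H_P$, which meets $F$ only in $L_x\cup L_y$.
\item It then prunes only $T_r^+(x)$ to the branch preparation of $Q_x$, keeping the rest of $F$. It recurses along $Q_x$ in the original $H$, where goodness is inherited by Lemma~\ref{lem:pruning} and monotonicity.
\item $U$, together with the middle system's vertices outside the pruned graph, is passed as the forbidden set rather than deleted from the host. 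It is handled by the routing lemma at the next level.
\item The same is done for $Q_y$.
\end{itemize}
The three systems meet only at $a$ and $b$ and are concatenated level by level. Your parameter checks, size bounds and concatenation argument are fine; only the single-host recursion needs to be replaced by this three-part scheme.
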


\begin{proof}
We argue by induction on $r$.  For $r=0$, take $P^0=xy$.

Let $r\ge1$, and assume the statement at rank $r-1$.  Put
$L_x=L_r^+(x)$ and $L_y=L_r^-(y)$.  We apply
Lemma~\ref{lem:routing} with $Z=U$, $s=2s_r-2$, $t=2s_{r-1}-2$, and
$D=D_r$.  
Condition~(\ref{eq:routing-local}) of the lemma follows from $s_r=4s_{r-1}$ and
$q=o(s_0)$,
since 
\[
t+|Z|/9\leq (2s_{r-1}-2)+q/100\leq s_r/2+o(s_r)\leq s
\]
For Condition~(\ref{eq:routing-global}) by the assumptions of the lemma we directly get
\[
 |V(G)\setminus V(H)|+|V(F)|+10|U|/9
 \le (k-r)\mu+\mu/4+q
 =o(\alpha s_{r-1}),
\]
as $q\ll\mu \ll s_0\leq s_{r-1}$.
Also $|L_x|,|L_y|\ge q>|U|/9$.  Thus there are
$a\in L_x$ and $b\in L_y$ joined by a shortest path $P$ in
\begin{equation}\label{eq:middle-host}
 H_P=H-\bigl((V(F)\setminus(L_x\cup L_y))\cup U\cup R_0\bigr),
 \qquad |R_0|\le |U|/9.
\end{equation}
Since $P$ is a shortest path, it only intersects $L_1$ and $L_2$ once, precisely at its endpoints, say at $a\in L_1$ and $b\in L_2$.  
It has fewer
than $\lambda/2$ edges and is
$(2s_{r-1}-2,D_{r-1})$-good in $H_P$.

Let $Q_x$ be the $x$--$a$ path in $T_r^+(x)$,
and let $Q_y$ be the $b$--$y$ path in $T_r^-(y)$.  By Claim~\ref{claim:branch-preparation}, each of those paths carries the
preparation required at rank $r-1$.

We first complete $P$.  Since $H_P$ is
$(2s_{r-1}-2,D_{r-1}+3)$-expanding, Theorem~\ref{thm:fp} extends $P$ to a
good copy $F_P$ of $A_{r-1}(P)$ in $H_P$.  Moreover,
\[
 |V(G)\setminus V(H_P)|
 \le (k-r)\mu+|V(F)|+|U|+|R_0|
 \le (k-r+1)\mu.
\]
Apply the induction hypothesis successively to the edges of $P$, each time
forbidding the systems already constructed on its preceding edges.  The
forbidden set in every application lies outside $F_P$, and its size,
together with the remaining construction, is at most
$|E(P)|b_{r-1}<\lambda b_{r-1}<q/100$.  We obtain a nested path system
$\mathcal P_P$ of depth $r-1$ from $a$ to $b$, using at most
$|E(P)|b_{r-1}$ vertices.

Next prune $T_r^+(x)$ to the branch $Q_x$ and its rank-$(r-1)$ preparation,
leaving the rest of $F$ unchanged.  Let $F_x$ be the resulting graph.  By
Lemma~\ref{lem:pruning}, $F_x$ is still good, and by monotonicity it is good
with the rank-$(r-1)$ parameters.  Put
\[
 U_x=(U\cup V(\mathcal P_P))\setminus V(F_x).
\]
Applying the induction hypothesis successively along $Q_x$ gives a nested
path system $\mathcal P_x$ of depth $r-1$.  Indeed,
\[
 |U_x|+|E(Q_x)|b_{r-1}
 \le |U|+(|E(P)|+|E(Q_x)|)b_{r-1}<q/100.
\]
The system $\mathcal P_x$ avoids $\mathcal P_P$ except at $a$: vertices of
$\mathcal P_P$ outside $F_x$ belong to $U_x$, while $H_P$ meets the
rank-$(r-1)$ preparation of $Q_x$ only at $a$.

Similarly, prune $T_r^-(y)$ to the preparation of $Q_y$, leaving the rest
of $F$ unchanged, and denote the resulting good graph by $F_y$.  With
\[
 U_y=(U\cup V(\mathcal P_P)\cup V(\mathcal P_x))\setminus V(F_y),
\]
the induction hypothesis applied successively along $Q_y$ gives a nested
path system $\mathcal P_y$ of depth $r-1$.  The total length of the three
base paths is less than $ \lambda/8+\lambda/2+\lambda/8<\lambda$,
so all the forbidden-set and size conditions used above follow from
$|U|+b_r\le q/100$ and $\lambda b_{r-1}<b_r$.

Now, write the three systems as
$
 \mathcal P_x=(Q_x^0,\ldots,Q_x^{r-1}),
 \mathcal P_P=(P^0,\ldots,P^{r-1}),$ and $
 \mathcal P_y=(Q_y^0,\ldots,Q_y^{r-1})
$.
For $1\le i\le r$, concatenate
$ Q_x^{i-1}P^{i-1}Q_y^{i-1}.
$
Together with $xy$ at level zero, these paths form a nested path system
$\mathcal P$ of depth $r$ from $x$ to $y$.  The three constituent systems
meet only at $a$ and $b$, so the resulting paths are simple and pairwise
edge-disjoint.  Finally,
\[
 |V(\mathcal P)|
 \le 2+(|E(Q_x)|+|E(P)|+|E(Q_y)|)b_{r-1}
 <b_r.
\]
The construction avoids $U$.  Moreover, $H_P$ meets $F$ only in
$L_x\cup L_y$, while the two side systems use vertices of $F$ only inside
their corresponding incidence trees.  This proves the required avoidance
of the rest of $F$.
\end{proof}

\subsection{Proof of Theorem~\ref{thm:nested}}

\begin{proof}
Choose $K,T$ as above and then choose the constant in the theorem large
enough that Lemma~\ref{lem:expander-subgraph} gives a graph $G$ satisfying
\eqref{eq:global-expansion-G} and~\eqref{eq:strong-expansion-G}.  We may
assume that its order $N$ is large enough for all the preceding estimates.

Let $C$ be a shortest cycle in $G$.  The local expansion condition gives
minimum degree at least three, and a breadth-first search yields $ |C|\le2\log N+2<\lambda.$ Every vertex has at most three neighbours on $C$, for otherwise one obtains
a shorter cycle.  Hence, for $|X|\le2s_k-2$,
\[
 |N_G(X)\setminus V(C)|\ge(K-3)|X|=D_k|X|.
\]
It follows directly from~\eqref{eq:extendable} that $C$ is
$(2s_k-2,D_k)$-good.

Orient $C$ and attach the trees prescribed by $A_k(C)$.  This graph is
obtained from $C$ by successively adding leaves, has maximum degree at most
$\delta$, and has fewer than $\mu/4<s_k$ vertices.  Since
$2s_k-2\le\log^T N$ and $K\ge D_k+2$, the
Friedman--Pippenger theorem embeds it in $G$ while preserving
$(2s_k-2,D_k)$-goodness.
Let $F\subseteq G$ denote the resulting embedded copy of $A_k(C)$.
  We keep this preparation fixed throughout the following iteration.
Clearly, $|V(F)|\leq |C||T_k(v)|\leq 10\log n\cdot(10q)^k=o(\mu)$.

Choose a vertex $u$ of $C$ and read the oriented cycle as a closed
$u$--$u$ path. 
We now iteratively can use Lemma~\ref{lem:prepared-edge} on its edges, one by one. Indeed, suppose we constructed path systems $\mathcal P_1,\mathcal P_2,\ldots, \mathcal P_j$, each of depth $k$, for the first $j<|C|$ edges of $C$, such that the collection of all these paths is pairwise internally disjoint, and the total number of vertices in them is at most $jb_k$. 

Then we apply Lemma~\ref{lem:prepared-edge} to the embedding of $A_k(C)$ in $H=G$, with $r=k$ and $U=\bigcup_{i\in[j]}V(\mathcal P_i)\setminus V(F)$. Note that \[|U|+b_r\leq (j+1)b_k=\Theta(\log^{4k}N)\leq  q/100=\Theta(\log^{10k}N),\]
so all conditions are satisfied, and thus we get a nested path system on the $(j+1)$-th edge, which avoids the earlier path systems and vertices of the preparation of the remaining part of the path, and is of size at most $b_k$. 
Hence we can continue applying the same lemma in the same way to the $k$-preparation of the path. 

Thus every edge gets a nested path system of depth $k$. Concatenating the paths at the same level of each path system, produces a nested cycle system for $C$ of depth $k$, and completes the proof. 
\end{proof}

\section{Graphs with no hybrid expander subgraph}
    Here we exhibit a construction that shows that growing degree does not force a hybrid expander subgraph. In particular, $d=\Omega(\log^* n)$ is already too weak to guarantee expansion of order $10$ even of sets of size $\log |H|$, where $|H|$ is the order of the subgraph.

\begin{thm}\label{thm:bad expansion}
There is a sequence of graphs $G_k$ with $n_k=|V(G_k)|$ and average degree
$\avd(G_k)\geq \frac16\log^* n_k$, and every subgraph $H\subseteq G_k$ with
$h:=|V(H)|\geq 10$ contains a nonempty set $S\subseteq V(H)$ of size
$|S|\leq \log h$ with expansion $|N_H(S)|< 10|S|$. In particular, no such
$H$ is a $(1,10)$-hybrid expander.
\end{thm}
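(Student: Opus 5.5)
We would build $G_k$ as a layered graph in which each layer is exponentially larger than all the layers beneath it, so that the number of layers is about $\log^* n_k$. Each vertex gets a constant number of neighbours in every lower layer, and these contributions add up to average degree $\Omega(\log^* n_k)$. Concretely, fix sizes $m_1=10^3$ and $m_{i+1}=2^{\lfloor m_i/c\rfloor}$ for a suitable absolute constant $c$, and let $V_1,\dots,V_k$ be disjoint layers with $|V_i|=m_i$. Each vertex of $V_i$ receives exactly one neighbour in each $V_j$, $j<i$; we also allow a fixed constant number of neighbours if that helps the count. These neighbours are chosen in a structured way, say by a base-$2$ encoding of the vertex's index, so that the neighbourhood pattern is controlled. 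The degrees of $V_k$ alone give $e(G_k)\ge (k-1)|V_k|$, and since $|V_k|\ge n_k/2$ we get $\avd(G_k)\ge k-1$. Finally $k\ge \frac16\log^* n_k$, because each layer increases $\log^*$ by at most about $1$ (plus a constant from $c$). The constant $\frac16$ leaves room to pad, for example by attaching up to $6$ neighbours per lower layer.

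For the expansion claim, let $H\subseteq G_k$ with $h=|V(H)|\ge 10$. First, if $H$ has a vertex of degree less than $10$, then $S=\{v\}$ works, since $1\le\log h$. So we may assume $\delta(H)\ge 10$. Let $i$ be the largest index with $V_i\cap V(H)\ne\varnothing$, and put $W=V(H)\cap V_i$ and $L=V(H)\cap(V_1\cup\dots\cup V_{i-1})$.

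Every vertex of $W$ has all its $H$-neighbours in $L$. This is where the structured choice of neighbourhoods matters. There are no edges inside a layer, and edges only go from a higher layer down to lower ones, so $N_H(W)\subseteq L$ and $|L|\le \sum_{j<i}m_j\le 2m_{i-1}$.

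\textbf{Case 1:} $|W|\ge |L|/10+1$. Take $S\subseteq W$ of size $\lceil |L|/10\rceil+1$. Then $|N_H(S)|\le|L|<10|S|$. We must also check $|S|\le\log h$. Here we use $h\ge|W|$ and arrange that the relevant scale works. If $|W|$ is huge compared with $m_{i-1}$, then $\log h\ge \log|W|\gtrsim m_{i-1}/c\ge |L|/10$. If $|W|$ is only moderately larger than $|L|$, we argue differently: we either shrink $S$, or we note that $\delta(H)\ge10$ forces each vertex of $W$ to have $10$ neighbours in $L$. This requires a $W$-vertex to have at least $10$ lower neighbours in total, which holds only once $i\ge 11$, and then double counting the edges between $W$ and $L$ bounds $|W|$.

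\textbf{Case 2:} $|W|\le |L|/10$. Then we recurse on $H' = H[L]$. In $G_{i-1}$, the graph formed by the first $i-1$ layers, we find $S'$ with $|N_{H'}(S')|<10|S'|$ and $|S'|\le \log|L|\le\log h$. Passing back to $H$ adds at most the neighbours of $S'$ in $W$. We control these by requiring, in the structured construction, that every lower vertex has few upward neighbours among any small set of upper vertices. Alternatively we absorb them by enlarging $S'$ with $N_H(S')\cap W$, which is small in Case 2.

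The main obstacle is the bookkeeping in the recursion. We need a single invariant that makes the small nonexpanding set $S$ have size at most $\log h$ for the \emph{current} $h$, even though $H$ may take only a thin slice of a huge top layer. We also need the upward edges from $W$ not to destroy the nonexpansion of the set found lower down. We would first aim to prove an inductive statement with some slack, such as $|N_H(S)|\le 9|S|$ and $|S|\le\frac12\log h$. We would then choose $c$ and the neighbour-assignment so that a vertex of $V_j$ has at most one neighbour among any $O(\log m_{i})$ vertices of $V_i$, which makes the Case 2 absorption cost only a constant factor.
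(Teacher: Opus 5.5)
There is a genuine gap, and with the neighbour assignment you suggest the statement is actually false. Write $f_{ij}(x)$ for the neighbour of $x\in V_i$ in $V_j$. If it is read off from the binary index of $x$, say as a prefix ($f_{ij}(x)=\lfloor x\,m_j/m_i\rfloor$) or as $x \bmod m_j$, then, since $m_2,m_3,\dots$ are powers of two, the maps are consistent: $f_{j'j}(f_{ij'}(x))=f_{ij}(x)$ for $j<j'<i$. Hence every $x\in V_k$, together with $f_{kj}(x)$ for $2\le j<k$, spans a clique on $k-1$ vertices. In $K_m$ with $m\ge 70$, every $S$ with $|S|\le\log m$ satisfies $|N(S)|=m-|S|\ge 10|S|$. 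So $H=K_{k-1}$ has no small non-expanding set once $k\ge 71$. More generally, nothing in your plan rules out small dense subgraphs, and for small $h$ that is exactly what has to be prevented. The paper builds this in from the start. $G_k$ is a union of matchings, and each new matching is added greedily only between vertices at distance at least $\lceil \e^{n_{i-2}}\rceil$. By Claim~\ref{lem:local sparsity}, every set $U$ with $|U|\le\lceil \e^{n_{j-1}}\rceil$ then spans at most $3|U|-1$ edges of colours $\ge j$.

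Even with a generic assignment, your two cases do not close. In Case 1 the requirement $|S|\le\log h$ fails when $|W|$ is only about $|L|/10$: then $h\le 11|W|$ and $\log h\approx\log|L|\ll |L|/10$. Your fallback does not rescue this. Double counting $e(W,L)\ge 10|W|$ gives no upper bound on $|W|$, because a vertex of $V_j$ has on average $m_i/m_j$ neighbours in $V_i$.

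In Case 2 the set $S'$ found in $H[L]$ can pick up arbitrarily many neighbours in $W$. The recursion also does not restart cleanly, since $H[L]$ need not have minimum degree $10$. The condition you propose to control this cannot hold: $|V_i|>|V_j|$ forces some vertex of $V_j$ to have two neighbours in $V_i$, and those two form a set of size $2$.

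The paper never controls upward edges vertex by vertex. It fixes a linear order in which all edges of colour $<r$ lie inside consecutive blocks of size $n_r$. It takes $r$ maximal with $t=n_r\le\log h$ and averages over the $h$ cyclic intervals of length $t$ in the inherited order on $V(H)$. Colours $<r$ give at most $2t$ external neighbours, and colours $r,r+1$ at most $2t$. Colours $\ge r+2$ span at most $3h-1$ edges on all of $V(H)$, because $h<\e^{n_{r+1}}$. So some interval receives fewer than $6t$ from them, and its neighbourhood has size below $10t$.

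Your plan is missing two things: this scale-dependent local sparsity, and an averaging choice of $S$ whose size is tuned to $h$.
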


The idea is to build the graph in levels. Each
level will be a matching, which makes the average degree grow, while every
union of levels $i, i+1,\ldots $ will remain sparse on sets up to a suitable size.

We first record the simple observation that allows us to add edges between far vertices
without destroying local sparsity.

\begin{claim}\label{lem:local sparsity}
Let $R$ be a graph and let $L\geq 1$ be an integer. Suppose that
$e(R[U])\leq 3|U|-1$ for every nonempty set $U\subseteq V(R)$ of size at
most $L$. If $x,y\in V(R)$ satisfy
$\operatorname{dist}_R(x,y)\geq L$, then the same property holds in $R+xy$.
\end{claim}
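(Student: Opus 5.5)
The plan is to argue by contradiction on a minimal counterexample set. Let $R'=R+xy$ and suppose some nonempty $U\subseteq V(R)$ with $|U|\le L$ has $e(R'[U])\ge 3|U|$. Since $R$ itself satisfies the bound, the new edge must be the cause: $x,y\in U$ and $e(R[U])=3|U|-1$ exactly, so that $e(R'[U])=3|U|$. Among all such violating sets we take one of minimum size. The goal is then to show that a minimal violator is connected in $R'$ after deleting the edge $xy$. That connectivity produces an $x$--$y$ path in $R[U]$, which has at most $|U|-1\le L-1<L$ edges, contradicting $\operatorname{dist}_R(x,y)\ge L$.

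The key step is this connectivity claim. Assume $R[U]$ is disconnected, and split $U=U_1\cup U_2$ with both parts nonempty and no $R$-edges between them. There are two cases.
\begin{itemize}
\item If $x$ and $y$ lie in the same part, say $U_1$, then $xy$ is also inside $U_1$ in $R'$. Then $e(R'[U])=e(R'[U_1])+e(R[U_2])$. Since $e(R[U_2])\le 3|U_2|-1<3|U_2|$, we get $e(R'[U_1])>3|U_1|-1$, i.e.\ $e(R'[U_1])\ge 3|U_1|$. This makes $U_1$ a smaller violator, contradicting minimality.
\item If $x\in U_1$ and $y\in U_2$, then $e(R'[U])=e(R[U_1])+e(R[U_2])+1\le (3|U_1|-1)+(3|U_2|-1)+1=3|U|-1$. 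So $U$ is not a violator, again a contradiction.
\end{itemize}
Hence $R[U]$ is connected. Since $x,y\in U$, there is an $x$--$y$ path in $R[U]$ of length at most $|U|-1\le L-1$. This gives $\operatorname{dist}_R(x,y)\le L-1<L$, the desired contradiction.

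The only subtle point is the case analysis above. Taking $U$ minimal handles the case where both endpoints lie in one component. The ``$-1$'' slack in the hypothesis $3|U|-1$ is exactly what absorbs the single added edge when $x$ and $y$ lie in different components. Because the property is claimed for every set of size at most $L$, the argument needs no further iteration.
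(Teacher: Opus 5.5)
Your proof is correct and follows the paper's argument: a violating set must contain $x,y$ with $e(R[U])=3|U|-1$, the $-1$ slack on each component forces $R[U]$ to be connected, and a path inside $U$ then has fewer than $L$ edges. The paper gets connectivity in one line by summing over all $s$ components, $3|U|-1\le e(R[U])\le 3|U|-s$. So your minimality assumption and case split are not needed, since any split of $U$ with no $R$-edges across already gives $e(R[U])\le 3|U|-2$.
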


\begin{proof}
Suppose that adding $xy$ creates a set $U$ of size at most $L$ for which
the property fails. Then $x,y\in U$ and $e(R[U])\geq 3|U|-1$. If
$U_1,\dots,U_s$ are the vertex sets of the components of $R[U]$, then
$e(R[U])\leq \sum_{a=1}^s(3|U_a|-1)=3|U|-s$. Thus $s=1$, so $R[U]$ is
connected and $\operatorname{dist}_R(x,y)\leq |U|-1<L$, a contradiction.
\end{proof}

\paragraph{The construction.}
We first construct the graphs together with the decomposition into
matchings that will be used later.

\begin{lem}\label{lem:construction}
There are integers
$1=n_0<n_1<n_2<\cdots$ and, for every $k\geq 1$, a graph $G_k$ on
$n_k$ vertices, together with a linear order on $V(G_k)$ so that $G_k$ is a union of edge-disjoint matchings $\mathcal M_1\cup\mathcal M_2\cup\ldots\cup \mathcal M_k$ and:

\begin{enumerate}

\item For every $0\leq r\leq k$, the ordered vertex set $V(G_k)$ can be
partitioned into consecutive blocks of size $n_r$ in such a way that every
edge from $\mathcal M_i$ with $i<r$, has both endpoints in the same block;

\item For $1\leq j\leq k$, let $Q_j^{(k)}= \mathcal M_j\cup\mathcal M_{j+1}\cup\ldots\cup\mathcal M_k$. Then every nonempty
set $U\subseteq V(G_k)$ of size
$|U|\leq \lceil \e^{n_{j-1}}\rceil$ satisfies
\[
    e(Q_j^{(k)}[U])\leq 3|U|-1.
\]

\item We have $\avd(G_k)\geq k/2\geq (\log^*n_k)/6$ and $\log^*n_k\leq 3k$.
\end{enumerate}
\end{lem}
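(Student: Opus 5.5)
The plan is to build $G_k$ recursively. $G_k$ will be a disjoint union of $m_k$ copies of $G_{k-1}$, placed consecutively in the linear order, together with one new almost perfect matching $\mathcal M_k$ whose edges join vertices that are far apart. The matchings $\mathcal M_1,\dots,\mathcal M_{k-1}$ of $G_k$ are the unions of the corresponding matchings in the copies. We set $n_k=m_kn_{k-1}$, and start from $G_0$, a single vertex, with $n_0=1$.

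\textbf{Property 1.} This is immediate by induction. Every $n_r$ with $r\le k-1$ divides $n_{k-1}$, so the blocks of size $n_r$ inside each copy of $G_{k-1}$ concatenate into a partition of $V(G_k)$ with the required property. For $r=k$ the whole vertex set is one block. The new matching $\mathcal M_k$ is never constrained, since only $i<r\le k$ matters.

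\textbf{Property 2.} Let $L=\lceil \e^{n_{k-2}}\rceil$, the largest threshold needed at levels $j\le k-1$ (for $k=1$ nothing is required beyond the matching case). Before $\mathcal M_k$ is added, $Q_j^{(k)}$ restricted to $\mathcal M_j\cup\dots\cup\mathcal M_{k-1}$ is a disjoint union of copies of $Q_j^{(k-1)}$. A violating set must induce a connected graph, by the component argument in Claim~\ref{lem:local sparsity}, so it lies in one copy and induction gives the bound there. Then we add the edges of $\mathcal M_k$ one at a time, always joining two currently unmatched vertices at distance at least $L$ in the current whole graph $R$. Distances in the subgraph obtained from $R$ by keeping only the levels $\ge j$ are at least as large, and $\lceil \e^{n_{j-1}}\rceil\le L$. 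So Claim~\ref{lem:local sparsity} preserves the sparsity condition for every $j\le k-1$ simultaneously. For $j=k$, $Q_k^{(k)}=\mathcal M_k$ is a matching, so $e\le |U|/2\le 3|U|-1$.

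\textbf{Greedy matching.} Such edges exist greedily. The current graph has maximum degree at most $k$, so a ball of radius $L$ has fewer than $(k+1)^{L}$ vertices. Hence while more than $(k+1)^L$ vertices are unmatched we can always find a partner, and at most $(k+1)^{L}$ vertices stay unmatched.

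\textbf{Property 3.} Choose $m_k$ so large that $(k+1)^L\le n_k/(10\cdot 2^k)$. Then, counting the unmatched fraction at each level, which is inherited by the copies, $2|\mathcal M_i|/n_k\ge 1-2^{-i}$. This gives $\avd(G_k)\ge k-1\ge k/2$ for $k\ge2$; for $k=1$ we need only that the matching is nonempty and covers at least half the vertices. For the upper bound on $\log^*n_k$, we need the minimal admissible $m_k$ to be at most about $2^{2^{n_{k-1}}}$, e.g.\ $m_k=\lceil 2^{k}(k+1)^{L+4}\rceil$ with $L\le 3^{n_{k-1}}$. Then $n_k\le 2^{2^{2^{n_{k-1}}}}$, so $\log^*n_k\le \log^*n_{k-1}+3$, giving $\log^* n_k\le 3k$ and hence $\avd(G_k)\ge k/2\ge(\log^*n_k)/6$.

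\textbf{Main obstacle.} The delicate step is the bookkeeping in Property 2. The single distance condition, taken in the whole current graph, has to serve all levels $j$ at once. The base step, that a violating set inside the copies lies within one copy, must also be checked, which is where connectivity is used. Beyond that, only the growth rate of $m_k$ needs checking so that Property 3's tower estimate holds.
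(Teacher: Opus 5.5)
Your proposal is correct and is essentially the paper's argument: copies of $G_{k-1}$ plus a greedy matching whose edges join vertices at distance at least $\lceil \e^{n_{k-2}}\rceil$ in the whole current graph, the component argument for the base case together with Claim~\ref{lem:local sparsity} for each added edge, a ball-size bound from $\Delta\le k$, and a three-logarithm growth estimate giving $\log^*n_k\le\log^*n_{k-1}+3$. The differences are cosmetic: the paper matches between two halves $X_i,Y_i$ and only needs $|M|\ge m_i/2$ per level, whereas you match freely and take more copies so that each matching is nearly perfect. Two phrasings should be tightened: it is a \emph{minimal} violating set that must be connected, and your sketched growth estimate works with the true threshold $L=\lceil \e^{n_{k-2}}\rceil\le 3^{n_{k-2}}$ (checking $k=2$ by direct substitution) rather than the cruder $L\le 3^{n_{k-1}}$, which does not give $m_k\lesssim 2^{2^{n_{k-1}}}$.
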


\begin{proof}
We recursively construct ordered graphs whose edges are coloured by
positive integers. In $G_i$, the edges of each colour will form a
matching. Whenever we take a copy of an earlier graph, we copy both its
vertex order and its edge colours.

Let $n_0=1$, let $n_1=2$, and let $G_1=K_2$, with its unique edge
coloured $1$. For $i\geq 1$, put
$\ell_i:=\lceil \e^{n_{i-1}}\rceil$. For $1\leq j\leq i$, let
$Q_j^{(i)}$ denote the spanning subgraph of $G_i$ formed by the edges
whose colours belong to $\{j,j+1,\dots,i\}$.

We shall ensure that
\begin{equation}\label{eq:local sparsity}
 e(Q_j^{(i)}[U])\leq 3|U|-1
\end{equation}
for every $1\leq j\leq i$ and every nonempty
$U\subseteq V(G_i)$ of size at most $\ell_j$. Thus, after discarding all
colours smaller than $j$, every set of size at most $\ell_j$ remains
sparse.

This clearly holds for $G_1$. Suppose now that $i\geq 2$ and that
$G_{i-1}$ has already been constructed. Let $X_i$ and $Y_i$ each be the
disjoint union of $2i^{\ell_{i-1}}$ ordered, edge-coloured copies of
$G_{i-1}$, and let $F_i$ be their disjoint union. Each of $X_i$ and
$Y_i$ therefore has
$m_i:=2i^{\ell_{i-1}}n_{i-1}$ vertices, while $F_i$ has $2m_i$ vertices.
For every old colour, its edges in $F_i$ form a matching, since they are
the union of the corresponding matchings in pairwise disjoint copies of
$G_{i-1}$.

Starting with an empty matching $M$, greedily add edges between unmatched
vertices $x\in X_i$ and $y\in Y_i$ whenever
$\operatorname{dist}_{F_i+M}(x,y)\geq\ell_{i-1}$, and stop when no such
pair remains.

At every stage of this procedure the current graph has maximum degree at
most $i$: the old edges are the union of $i-1$ matchings, and $M$ is
itself a matching. Hence every ball of radius $\ell_{i-1}-1$ has at most
$1+i+\cdots+i^{\ell_{i-1}-1}\leq i^{\ell_{i-1}}$ vertices.

Suppose that, when the procedure stops, more than
$i^{\ell_{i-1}}$ vertices remain unmatched on each side. Choose an
unmatched vertex $x\in X_i$. Since the ball of radius
$\ell_{i-1}-1$ around $x$ contains at most $i^{\ell_{i-1}}$ vertices,
some unmatched vertex of $Y_i$ lies outside this ball. It could be joined
to $x$, contrary to the maximality of $M$. Thus at most
$i^{\ell_{i-1}}$ vertices remain unmatched on either side, and so
$|M|\geq m_i-i^{\ell_{i-1}}\geq m_i/2$.

Define $G_i:=F_i+M$, retaining all the old edge colours and colouring the
edges of $M$ with the new colour $i$. The old colour classes remain
matchings, and the new colour class is the matching $M$. Moreover,
$n_i:=|V(G_i)|=2m_i=4n_{i-1}i^{\ell_{i-1}}$.

Order first the copies of $G_{i-1}$ belonging to $X_i$, then those
belonging to $Y_i$, always using the inherited order inside each copy.
Iterating this rule, for every $0\leq r\leq i$ the natural copies of
$G_r$ form a partition of $V(G_i)$ into consecutive blocks of size
$n_r$, where for $r=0$ these are simply the singleton blocks. Every edge
of colour less than $r$ has both endpoints in one such copy of $G_r$.
This gives the required block property.

It remains to verify \eqref{eq:local sparsity}. Fix $j\leq i-1$. Before
the edges of $M$ are added, the graph formed by the colours
$j,j+1,\dots,i-1$ is a disjoint union of copies of $Q_j^{(i-1)}$. If a
nonempty set $U$ of size at most $\ell_j$ meets $s$ of these copies in
the nonempty sets $U_1,\dots,U_s$, then, by induction,
$e(Q_j^{(i-1)}[U_1])+\cdots+e(Q_j^{(i-1)}[U_s])
\leq 3|U|-s\leq 3|U|-1$.

Now consider one step of the greedy procedure. Let $M'$ be the matching
chosen so far, and suppose that $xy$ is the next edge to be added. For
each $j\leq i-1$, consider the graph consisting of the old edges of
colours $j,\dots,i-1$ together with the edges of $M'$. This graph is a
subgraph of $F_i+M'$, so the distance between $x$ and $y$ in it is at
least
$\operatorname{dist}_{F_i+M'}(x,y)\geq\ell_{i-1}\geq\ell_j$.
Claim~\ref{lem:local sparsity} therefore shows that adding $xy$ preserves
\eqref{eq:local sparsity}. Applying this argument one edge at a time
proves the property for every $j\leq i-1$. For $j=i$, the graph
$Q_i^{(i)}=M$ is a matching, so the property is immediate.

Since $G_i$ is the union of $i$ matchings, we have $\Delta(G_i)\leq i$.
Moreover, $F_i$ is a disjoint union of copies of $G_{i-1}$ and hence has
the same average degree as $G_{i-1}$. Since $n_i=2m_i$, it follows that
$\avd(G_i)=\avd(G_{i-1})+2|M|/n_i
=\avd(G_{i-1})+|M|/m_i\geq\avd(G_{i-1})+\frac12$.
Consequently, $\avd(G_i)\geq i/2$.

Finally, we control the growth of $n_i$. Put $x:=n_{i-1}$. The recurrence
implies $i\leq x$, and
$\ell_{i-1}=\lceil\e^{n_{i-2}}\rceil\leq 2\e^x$. Hence, very crudely,
\[
 n_i=4x i^{\ell_{i-1}}
 \leq 4x x^{2\e^x}
 \leq \e^{\e^{3x}}.
\]
Thus three applications of $\log$ reduce $n_i$ to at most
$x=n_{i-1}$: indeed,
$\log\log\log n_i\leq\log(3x)\leq x$. Therefore
$\log^*n_i\leq\log^*n_{i-1}+3$, and hence
$\log^*n_k\leq 3k$. Together with $\avd(G_k)\geq k/2$, this gives
$\avd(G_k)\geq(\log^*n_k)/6$.

For a fixed $k$, let $\mathcal M_j$ be the set of edges of colour $j$ in
$G_k$. These colour classes are edge-disjoint matchings whose union is
$G_k$, and the preceding construction gives properties~(1)--(3).
\end{proof}

\noindent\textbf{Proof that the construction works.}
We are ready to show the main result of this section.
\begin{proof}[Proof of Theorem~\ref{thm:bad expansion}]
For each $k\geq 1$, let $G_k$ be the graph given by
Lemma~\ref{lem:construction}. By property~(3),
$\avd(G_k)\geq (\log^*n_k)/6$.

It remains to prove the assertion about expansion. Fix $k$, and let
$H\subseteq G_k$ have vertex set $A$ and order $h:=|A|\geq 10$. 
Choose the largest $r\in\{0,\dots,k\}$ such that
$n_r\leq\log h$, and put $t:=n_r$. 
List the vertices of $A$ in the order inherited from $G_k$, regard this
order cyclically, and consider the $h$ cyclic intervals of length $t$.
Every vertex of $A$ belongs to exactly $t$ of these intervals.

We first estimate the neighbours arising from the low-index matchings.
By property~(1), every edge in
  $\mathcal M_1\cup\cdots\cup\mathcal M_{r-1}$ has endpoints
  at distance at most $t-1$ in the inherited order on $A$.
  Consequently, for any cyclic interval $W$ of length $t$,
  its external neighbours through these matchings lie among
  the $t-1$ vertices immediately preceding or immediately
  following $W$. These matchings therefore contribute at
  most $2(t-1)\le2t$ external neighbours.

The matchings $\mathcal M_r$ and
$\mathcal M_{r+1}$ each contribute at most another $t$ external
neighbours.

It remains to control the matchings of index at least $r+2$. Suppose
first that $r\leq k-2$, and put $Q:=Q_{r+2}^{(k)}$. By the maximality of
$r$, we have $n_{r+1}>\log h$, and hence
$h<\e^{n_{r+1}}\leq\lceil\e^{n_{r+1}}\rceil$. Property~(2), applied with
$j=r+2$ and $U=A$, therefore gives $e(Q[A])\leq 3h-1$.

Since every vertex of $A$ lies in exactly $t$ of the cyclic intervals,
the average of $\sum_{v\in W}d_{Q[A]}(v)$ over all such intervals $W$ is
\[
 \frac{t}{h}\sum_{v\in A}d_{Q[A]}(v)
 =\frac{2t}{h}e(Q[A])<6t.
\]
There is consequently an interval $W$ for which
$\sum_{v\in W}d_{Q[A]}(v)<6t$. In particular, the matchings
$\mathcal M_{r+2},\dots,\mathcal M_k$ contribute fewer than $6t$
external neighbours of $W$. If $r=k-1$, there are no such matchings,
and we may take any cyclic interval $W$ of length $t$.

Combining the three estimates, we obtain
\[
 |N_{G_k[A]}(W)|<2t+t+t+6t=10t.
\]
Since $|W|=t=n_r\leq\log h$  and $|N_A(W)|<10|W|$, we conclude that $H$ is not a
$(1,10)$-hybrid expander.
\end{proof}

\section{Concluding remarks}
\label{sec:concluding-remarks}

Under the average-degree assumption $2^{C\log^* n}$, the methods developed
here give short proofs of the existence of pillars
\cite{FernandezLiuPillar}, cycles whose lengths are powers of two, and
balanced subdivisions of any fixed graph~\cite{LiuMontgomery}.
We sketch how hybrid expansion provides the additional control over
path lengths needed for these applications.

These applications require some additional control over path lengths. It is
convenient to construct gadgets that allow these lengths to be adjusted;
see the \emph{adjusters} introduced by Liu and
Montgomery~\cite[Section~4]{LiuMontgomery}. Such gadgets can be constructed
in a straightforward way using hybrid expansion and the same embedding and
routing tools developed here. One first passes to a bipartite subgraph and
then extracts a hybrid expander. A shortest even cycle gives two paths
between suitable vertices whose lengths differ by two. Attaching rooted
trees and joining $m$ such pieces in succession gives a choice of
paths with lengths $\ell,\ell+2,\ldots,\ell+2m$, allowing the necessary
length corrections while respecting parity.

\vspace{1 cm}

\textbf{Acknowledgments.} We are grateful to Domagoj Brada\v{c} for suggesting the key idea behind the proof of Theorem~\ref{thm:bad expansion}: to equip the graph with an ordering and use edges whose lengths, at each vertex, grow at a tower-type rate. The remaining principal ideas in the paper are due to the author. We also acknowledge the use of GPT-5.6 Sol, which identified and helped correct a technical issue in an earlier version of the manuscript; the correction required a non-trivial application of the machinery developed in the paper.

GPT-5.6 Sol was additionally used for typesetting and in the preparation of some parts of the proofs: in these instances, we supplied  detailed descriptions and drafts of the intended arguments, which were then written up with its assistance. The author gratefully acknowledges OpenAI for providing access to ChatGPT through the ChatGPT for Academic Researchers program.

\textbf{Note.} This work was obtained independently of Ai, Gutin and Hao. A revised version of their preprint claims a constant average-degree bound.

\end{document}